\newcommand{\mylabel}[2]{#2\def\@currentlabel{#2}\label{#1}}
\newcommand{\tmfrac}[2]{\mbox{\large$\frac{#1}{#2}$}} 
\newcommand{\unaryminus}{\scalebox{0.75}[1.0]{\( - \)}}
\newcommand{\bsm}{\left(\begin{smallmatrix}}
\newcommand{\esm}{\end{smallmatrix}\right)}
\newtheorem{theorem}{Theorem}[section]
\newtheorem{lemma}[theorem]{Lemma}
\newtheorem{proposition}[theorem]{Proposition}
\newtheorem{question}[theorem]{Question}
\theoremstyle{definition}
\newtheorem{definition}[theorem]{Definition}
\newtheorem{remark}[theorem]{Remark}
\newtheorem{construction}[theorem]{Construction}
\newtheorem*{claim*}{Claim}
\newcommand{\Z}{\mathbb{Z}}
\newcommand{\Q}{\mathbb{Q}}
\newcommand{\proj}{\operatorname{proj}}
\newcommand{\id}{\operatorname{id}}
\begin{document}
\title{A criterion for double sliceness.}
\author{Anthony Conway}
\address{The University of Texas at Austin, Austin TX 78712}
\email{anthony.conway@austin.utexas.edu}
\begin{abstract}
We describe a condition involving noncommutative Alexander modules which ensures that a knot with Alexander module $\Z[t^{\pm 1}]/(t\unaryminus 2) \oplus \Z[t^{\pm 1}]/(t^{-1}\unaryminus 2)$ is topologically doubly slice. 
As an application,  we show that a satellite knot $R_\eta(K)$ is doubly slice if the pattern~$R$ has Alexander module $\Z[t^{\pm 1}]/(t\unaryminus 2) \oplus \Z[t^{\pm 1}]/(t^{-1}\unaryminus 2)$ and satisfies this condition, and if the infection curve $\eta \subset S^3 \setminus R$ lies in the second derived subgroup~$\pi_1(S^3 \setminus R)^{(2)}.$
\end{abstract}

\def\subjclassname{\textup{2020} Mathematics Subject Classification}
\expandafter\let\csname subjclassname@1991\endcsname=\subjclassname
\subjclass{
57K10, 
57N70, 
}
\keywords{Doubly slice knot,  noncommutative Alexander module,  satellite knot, ribbon disc}
\maketitle

\section{Introduction}

A knot is \emph{slice} if it bounds a locally flatly embedded disc in the~$4$-ball.
The construction of slice discs
typically involves performing band moves on a knot, resulting in a ribbon disc.
In the topological category,  applications of Freedman's disc embedding theorem~\cite{Freedman,FreedmanQuinn,DETBook} lead to other less constructive results: for example, Freedman proved that Alexander polynomial one knots are slice~\cite{Freedman}, and Friedl-Teichner found a condition guaranteeing the sliceness of certain knots with Alexander polynomial~$(t \unaryminus 2)(t^{-1}\unaryminus 2)$~\cite{FriedlTeichner}.
For constructions of slice links, we refer to~\cite{FreedmanNewTechnique,
FreedmanWhitehead,FreedmanTeichner,CochranFriedlTeichner,
ChaPowell,
Manchester}.

Alexander polynomial one knots are in fact \emph{doubly slice}, meaning that they arise as the equatorial cross-section of an unknotted locally flat~$2$-sphere in $S^4$: this can be seen by doubling Freedman's disc and using that a $2$-knot $S \subset S^4$ with knot group $\Z$ is topologically unknotted~\cite{Freedman}.
The aim of this article is to describe a criterion for a knot~$K$ with Alexander module~$\Z[t^{\pm 1}]/(t\unaryminus 2) \oplus \Z[t^{\pm 1}]/(t^{-1}\unaryminus 2)$ to be doubly slice (Theorem~\ref{thm:MainIntro}).
We then investigate to what extent this condition is necessary (Theorem~\ref{thm:Converse}) and apply it to
satellite knots 
(Theorem~\ref{thm:NewDoublySliceIntro}).
Our results rely heavily on~\cite{FriedlTeichner}.
We work in the topological category.

\subsection{Necessary and sufficient conditions for double sliceness}

Given a knot~$K \subset S^3$, write~$M_K$ for the~$3$-manifold obtained by~$0$-surgery on~$K$,~$\pi:=\pi_1(M_K)$ for the fundamental group of~$M_K$, and~$\mathcal{A}_K$ for the Alexander module of~$K$, i.e. the first homology group of the infinite cyclic cover of~$M_K$.
We also use $\pi^{(0)}:=\pi$ and $\pi^{(i+1)}:=[\pi^{(i)},\pi^{(i)}]$ to denote the derived series of~$\pi$.
The groups~$\mathcal{A}_K \cong \pi^{(1)}/\pi^{(2)}$ and~$H_1(M_K)=\pi/\pi^{(1)}$ fit into the short exact sequence
\begin{equation}
\label{eq:SESIntro}
1 \to  \mathcal{A}_K  \to \pi/\pi^{(2)} \to H_1(M_K) \to 1.
\end{equation}
Since~$H_1(M_K) \cong \Z$ is free, this sequence splits,  leading to a group isomorphism~$\pi/\pi^{(2)} \cong \Z \ltimes \mathcal{A}_K$.

Set~$\Lambda:=\Z[t^{\pm 1}]$ and assume that $\mathcal{A}_K \cong \Lambda/(t\unaryminus 2) \oplus \Lambda/(t^{-1}\unaryminus 2)$.
If $P \subset\mathcal{A}_K$ is one of the two summands, then~$\Z \ltimes \mathcal{A}_K/P  \cong \Z \ltimes \Z[\tmfrac{1}{2}]$ is isomorphic, as a group,  to the Baumslag-Solitar group
$$G:=\langle a,c \mid aca^{-1}=c^2 \rangle=BS(1,2).$$
Consequently one can associate to the summand~$P \subset\mathcal{A}_K$,  the epimorphism
$$ \phi_P \colon \pi \twoheadrightarrow \pi/\pi^{(2)} \cong \Z \ltimes \mathcal{A}_K \twoheadrightarrow\Z \ltimes \mathcal{A}_K/P \cong G.$$
Write~$H_1(M_K;\Z[G]_{\phi_P})$ for the noncommutative Alexander module associated to~$\phi_P$, i.e.  the first homology of the~$G$-cover associated to~$\phi_P$.
As we note in Lemma~\ref{lem:ExtWellDef},  the isomorphism type of this $\Z[G]$-module is independent of the splitting of~\eqref{eq:SESIntro} and of the identification of~$\Z \ltimes \mathcal{A}_K$ with~$G=BS(1,2)$.
It therefore makes sense to say that~$P$ \emph{satisfies the Ext condition} if
$$\operatorname{Ext}^1_{\Z[G]}(H_1(M_K;\Z[G]_{\phi_P}),\Z[G])=0.$$

Our main result gives an algebraic criterion for a knot~$K$ to be doubly slice.

\begin{theorem}
\label{thm:MainIntro}
Let~$K$ be a knot with Alexander module~$\mathcal{A}_K \cong \Lambda/(t\unaryminus 2) \oplus \Lambda/(t^{-1}\unaryminus 2)$.
If both summands of~$\mathcal{A}_K$ satisfy the Ext condition,  then~$K$ is doubly slice.
\end{theorem}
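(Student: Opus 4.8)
The plan is to realize $K$ as the equator of an unknotted $2$-sphere $S \subset S^4$ by producing the two slice discs bounded by $K$ separately from the two summands, and then proving that their union is unknotted by showing that the fundamental group of its complement is $\Z$. Writing $S^4 = D^4_+ \cup_{S^3} D^4_-$, it will suffice to construct locally flat slice discs $D_+ \subset D^4_+$ and $D_- \subset D^4_-$ with $\partial D_\pm = K \subset S^3$ whose union $S := D_+ \cup_K D_-$ satisfies $\pi_1(S^4 \setminus S) \cong \Z$: by Freedman's theorem that a locally flat $2$-knot with infinite cyclic knot group is topologically unknotted (already invoked in the introduction), such an $S$ is unknotted, so that $K$ is doubly slice.

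\textbf{Constructing the discs.} Write $P_1, P_2 \subset \mathcal{A}_K$ for the two summands and $\phi_{P_1}, \phi_{P_2} \colon \pi \twoheadrightarrow G$ for the associated epimorphisms. First I would invoke the sliceness machinery of Friedl--Teichner, for which the Ext condition is the precise algebraic input: since $P_i$ satisfies it, one can build a topological $4$-manifold with boundary $M_K$ and fundamental group $G$ whose equivariant intersection form is nonsingular, the vanishing of $\operatorname{Ext}^1_{\Z[G]}(H_1(M_K;\Z[G]_{\phi_{P_i}}),\Z[G])$ being exactly what makes the relevant $\Z[G]$-module dualize correctly. Applying Freedman's disc embedding theorem, legitimate because $G = BS(1,2)$ is solvable and hence a good group, would then produce a locally flat slice disc $D_i$ for $K$ for which the inclusion induces an isomorphism $\pi_1(D^4 \setminus \nu D_i) \xrightarrow{\ \cong\ } G$ identifying the composite $\pi \to \pi_1(D^4 \setminus \nu D_i) \cong G$ with $\phi_{P_i}$. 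I would place $D_+ := D_1$ in $D^4_+$ and $D_- := D_2$ in $D^4_-$.

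\textbf{Computing the complement.} Setting $W_i := D^4 \setminus \nu D_i$ and $E_K := S^3 \setminus \nu K$, I would decompose $S^4 \setminus \nu S = W_1 \cup W_2$ with $W_1 \cap W_2$ deformation retracting to $E_K$. Since $E_K \subset \partial W_i = M_K$, the maps $\pi_1(E_K) \to \pi_1(W_i)$ kill the longitude and factor through $\pi = \pi_1(M_K)$, so van Kampen's theorem presents $\pi_1(S^4 \setminus S)$ as the pushout of $G \xleftarrow{\ \phi_{P_1}\ } \pi \xrightarrow{\ \phi_{P_2}\ } G$, namely $(G * G)/N$ with $N$ normally generated by $\{\phi_{P_1}(g)\phi_{P_2}(g)^{-1} : g \in \pi\}$. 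The key observation is that the two summands cancel: for $g$ lifting an element of $P_1 \subset \mathcal{A}_K$ one has $\phi_{P_1}(g) = 1$, while $\phi_{P_2}(g)$ ranges over the full normal $\Z[\tmfrac{1}{2}]$-subgroup of the second copy of $G$ (the image of $\mathcal{A}_K \to \mathcal{A}_K/P_2 \cong P_1$), so the relations $\phi_{P_1}(g) = \phi_{P_2}(g)$ trivialise that subgroup; symmetrically the elements of $P_2$ trivialise the $\Z[\tmfrac{1}{2}]$-subgroup of the first copy. What remains is $\Z * \Z$, and the relation coming from a meridian identifies the two generators, yielding $\pi_1(S^4 \setminus S) \cong \Z$ and hence, by the first paragraph, doubly sliceness of $K$.

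\textbf{Main obstacle.} The genuinely hard and technical step is the construction of the discs: extracting from the Ext condition a slice disc whose complement has fundamental group exactly $G$ and realizes $\phi_{P_i}$ rests on the full force of the Friedl--Teichner construction together with Freedman's disc embedding theorem, and checking that the Ext hypothesis feeds correctly into the nonsingularity of the equivariant intersection form is where the real effort lies. By contrast the van Kampen collapse is formal once one sees that killing $P_1$ on one side and $P_2$ on the other forces both noncommutative Alexander modules to die in the amalgam. The one subtlety I would verify with care is that $\phi_{P_1}$ and $\phi_{P_2}$ send a meridian to the same generator of $G^{\mathrm{ab}} \cong \Z$, so that the two surviving copies of $\Z$ are identified rather than left as $\Z * \Z$.
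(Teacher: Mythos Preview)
Your proposal is correct and follows essentially the same route as the paper: invoke Friedl--Teichner (together with the Conway--Powell classification) to produce, from each summand satisfying the Ext condition, a $G$-homotopy ribbon disc realising $\phi_{P_i}$, glue the two discs into a sphere, compute the knot group of the sphere by van Kampen as the pushout of $G \xleftarrow{\phi_{P_1}} \pi \xrightarrow{\phi_{P_2}} G$, see that it collapses to $\Z$, and conclude by Freedman's unknotting theorem. The paper organises the van Kampen collapse via the diagram reducing to the pushout of $\Z \ltimes P_2 \leftarrow \Z \ltimes (P_1 \oplus P_2) \rightarrow \Z \ltimes P_1$, which is exactly your ``$P_1$ kills one $\Z[\tfrac12]$, $P_2$ kills the other, the meridians merge'' computation written structurally; your hands-on phrasing is equivalent. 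One small point: the disc produced by the cited machinery realises $\phi_{P_i}$ only up to postcomposition with an automorphism of $G$, but since outer automorphisms on the two targets do not change the isomorphism type of the pushout this is harmless---the paper notes this and then suppresses it, and your ``meridian to the same generator'' worry is subsumed by it.
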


Knots with trivial Alexander module (i.e.  Alexander polynomial one knots) are doubly slice, so Theorem~\ref{thm:MainIntro} can be thought of as an analogue of this result when the Alexander module is~$\Lambda/(t\unaryminus 2) \oplus \Lambda/(t^{-1}\unaryminus 2)$.
Another way of thinking of Theorem~\ref{thm:MainIntro} is as doubly-slice refinement of Friedl and Teichner's theorem~\cite{FriedlTeichner} that, given a knot $K$, if there exists an epimorphism~$\psi \colon \pi_1(M_K) \twoheadrightarrow~G$ with $\operatorname{Ext}^1_{\Z[G]}(H_1(M_K;\Z[G]_\psi),\Z[G])=0$, then~$K$ is~$G$-homotopy ribbon.
Here a disc $D$ is called \emph{homotopy ribbon} if the inclusion induced map $\pi_1(S^3 \setminus K) \to \pi_1(D^4 \setminus D)$ is surjective and \emph{$G$-homotopy ribbon} if, additionally,  $\pi_1(D^4 \setminus D) \cong G$.
We will refer to $\pi_1(D^4 \setminus D)$ as the (disc) \emph{group of $D$} by analogy with classical knot theory where $\pi_1(S^3 \setminus K)$ is called the (knot) group of $K$.
We emphasise that the proof of Theorem~\ref{thm:MainIntro} relies heavily on~\cite{FriedlTeichner}.

Necessary and sufficient conditions are known for a knot~$K$ to be~$G$-homotopy ribbon~\cite[Corollary 1.7]{ConwayPowellDiscs} and, similarly,  it is natural to wonder whether the \emph{double Ext condition} of Theorem~\ref{thm:MainIntro} is necessary for double sliceness.
Unfortunately this is not the case: Proposition~\ref{prop:NotNecessary} describes a doubly slice knot~$K$ that has Alexander module $\mathcal{A}_K \cong \Lambda/(t\unaryminus 2)  \oplus \Lambda/(t^{-1}\unaryminus 2)$ but that does not satisfy the double Ext condition.
We nevertheless obtain the following converse.

\begin{theorem}
\label{thm:Converse}
Let~$K$ be a knot with Alexander module~$\mathcal{A}_K \cong \Lambda/(t\unaryminus 2)  \oplus \Lambda/(t^{-1}\unaryminus 2)$.
The following statements are equivalent:
\begin{enumerate}
\item The knot~$K \subset S^3$ arises as the equatorial cross-section of a~$2$-sphere~$S \subset S^4=D^4_1 \cup_{S^3} D^4_2$ with $\pi_1(S^4 \setminus S) \cong \Z$,  and the group of the disc~$D_i=D^4_i \cap S$ is metabelian for $i=1,2.$
\item The summands of~$\mathcal{A}_K$ both satisfy the Ext condition.
\end{enumerate}
\end{theorem}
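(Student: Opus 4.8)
The plan is to prove the two implications by matching the doubly slice structure with the $G$-homotopy ribbon discs governed by the Ext condition, using Friedl--Teichner's theorem~\cite{FriedlTeichner} in one direction and the Conway--Powell characterisation~\cite[Corollary 1.7]{ConwayPowellDiscs} in the other. For the implication $(2) \Rightarrow (1)$, I would invoke Theorem~\ref{thm:MainIntro}: since both summands satisfy the Ext condition, $K$ is doubly slice, and the proof realises the unknotted sphere as a union $S = D_1 \cup_K D_2$ of the two $G$-homotopy ribbon discs supplied by~\cite{FriedlTeichner} for the epimorphisms $\phi_{P_1}$ and $\phi_{P_2}$. These discs have group $\pi_1(D^4_i \setminus D_i) \cong G = BS(1,2)$, which is metabelian because $G \cong \Z \ltimes \Z[\tmfrac{1}{2}]$ has abelian commutator subgroup $\Z[\tmfrac{1}{2}]$; and as $S$ is unknotted, Freedman's theorem gives $\pi_1(S^4 \setminus S) \cong \Z$. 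This is exactly $(1)$.

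The substance lies in $(1) \Rightarrow (2)$. Writing $W_i := D^4_i \setminus \nu(D_i)$ and $\mathcal{A}_{D_i} := H_1(\widetilde{W_i})$ for the Alexander module of $D_i$, I would first run a Mayer--Vietoris argument for the infinite cyclic covers of the decomposition $S^4 \setminus \nu(S) = W_1 \cup W_2$ along the knot exterior. Since $\pi_1(S^4 \setminus S) \cong \Z$, the sphere $S$ is unknotted, so $S^4 \setminus \nu(S) \simeq S^1$ and its infinite cyclic cover is contractible; the sequence then collapses to an isomorphism $\mathcal{A}_K \cong \mathcal{A}_{D_1} \oplus \mathcal{A}_{D_2}$ under which the inclusion-induced maps $f_i \colon \mathcal{A}_K \to \mathcal{A}_{D_i}$ become the projections. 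Setting $P_i := \ker f_i$ yields an internal splitting $\mathcal{A}_K = P_1 \oplus P_2$ with $P_i \cong \mathcal{A}_{D_{3-i}}$. I would then pin down the $P_i$: because $(t\unaryminus 2)$ and $(t^{-1}\unaryminus 2)$ are coprime in $\Lambda$, the summands $A := \Lambda/(t\unaryminus 2)$ and $A^* := \Lambda/(t^{-1}\unaryminus 2)$ are the canonical primary components of $\mathcal{A}_K$, every submodule respects this decomposition, and an elementary analysis of the ideals of $\Z[\tmfrac{1}{2}]$ shows that each $P_i$ is one of $0$, $A$, $A^*$, $\mathcal{A}_K$. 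The half-lives-half-dies property of a slice disc forces each $P_i = \ker f_i$ to be a metabolizer for the Blanchfield pairing, which rules out $0$ and $\mathcal{A}_K$ by nondegeneracy (as $\mathcal{A}_K \neq 0$); hence $\{P_1, P_2\} = \{A, A^*\}$, the two summands of $\mathcal{A}_K$.

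It then remains to upgrade each $D_i$ to a $G$-homotopy ribbon disc and apply Conway--Powell. Since $\pi_1(W_i)$ is metabelian its second derived subgroup vanishes, so $\pi_1(W_i)^{(1)} = \mathcal{A}_{D_i}$ and, splitting off $H_1(W_i) \cong \Z$, one obtains $\pi_1(W_i) \cong \Z \ltimes \mathcal{A}_{D_i}$; as $\mathcal{A}_{D_i} \cong \mathcal{A}_K/P_i$ is $\Z[\tmfrac{1}{2}]$ with the appropriate $t$-action, this is exactly $G = BS(1,2)$. Homotopy ribbonness is automatic here: the inclusion-induced map $\pi := \pi_1(M_K) \to \pi_1(W_i)$ is onto on $H_1$, while on commutator subgroups it induces the surjection $f_i \colon \mathcal{A}_K \twoheadrightarrow \mathcal{A}_{D_i} = \pi_1(W_i)^{(1)}$, so it is surjective. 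Thus each $D_i$ is a $G$-homotopy ribbon disc whose inducing epimorphism coincides, after the identifications of Lemma~\ref{lem:ExtWellDef}, with $\phi_{P_i}$. The necessity direction of~\cite[Corollary 1.7]{ConwayPowellDiscs}, applied to $D_i$, then gives $\operatorname{Ext}^1_{\Z[G]}(H_1(M_K;\Z[G]_{\phi_{P_i}}),\Z[G]) = 0$ for $i = 1,2$, which is $(2)$.

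The main obstacle I anticipate is precisely the structural analysis in $(1) \Rightarrow (2)$: converting the bare hypothesis that the disc groups are metabelian into the sharp conclusion that each $W_i$ has fundamental group $BS(1,2)$ with inducing map $\phi_{P_i}$. This requires controlling the Mayer--Vietoris splitting, identifying the kernels $P_i$ with the canonical summands $A, A^*$ through the Blanchfield metabolizer property (ruling out the degenerate cases $P_i \in \{0,\mathcal{A}_K\}$), and verifying that the resulting epimorphisms are the $\phi_{P_i}$ of the Ext condition, so that the Conway--Powell criterion applies verbatim.
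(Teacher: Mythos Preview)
Your $(1)\Rightarrow(2)$ argument is essentially the paper's: the Mayer--Vietoris splitting, the identification of $\{P_{D_1},P_{D_2}\}$ with the two canonical summands, the deduction that $\pi_1(W_i)\cong G$ from the metabelian hypothesis together with $\mathcal{A}_{D_i}\cong\Z[\tmfrac{1}{2}]$, and the final appeal to~\cite{ConwayPowellDiscs} all match Propositions~\ref{prop:GoalTechnical} and~\ref{prop:BS12}. The paper pins down the $P_{D_i}$ via orders and a $\Z$-torsion-freeness argument rather than via the Blanchfield pairing, but this is a cosmetic difference; your isotropy observation is in fact what cleanly excludes the degenerate possibility $P_{D_i}\in\{0,\mathcal{A}_K\}$.

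The genuine gap is in $(2)\Rightarrow(1)$. Invoking Theorem~\ref{thm:MainIntro} here is circular: in the paper, Theorem~\ref{thm:MainIntro} is \emph{deduced from} Theorem~\ref{thm:Converse}, so its proof contains no independent argument you can borrow. Your sentence ``as $S$ is unknotted, Freedman's theorem gives $\pi_1(S^4\setminus S)\cong\Z$'' also has the logic inverted---Freedman's theorem is the implication $\pi_1\cong\Z\Rightarrow\text{unknotted}$---and you have given no reason why $S=D_1\cup_K D_2$ should be unknotted to begin with. What is actually required, and what the paper carries out, is a direct van Kampen computation: knowing that the inclusion-induced epimorphisms $\pi_1(E_K)\twoheadrightarrow\pi_1(N_{D_i})\cong G$ agree (up to automorphism) with $\phi_{P_i}$, one factors them through $\pi_K/\pi_K^{(2)}\cong\Z\ltimes(P_1\oplus P_2)$ and identifies the pushout
\[
\pi_1(N_{D_1})\ast_{\pi_1(E_K)}\pi_1(N_{D_2})
\;\cong\;
(\Z\ltimes P_2)\ast_{\Z\ltimes(P_1\oplus P_2)}(\Z\ltimes P_1)
\;\cong\;\Z.
\]
This step---showing that gluing the two $G$-discs along $E_K$ kills both summands of $\mathcal{A}_K$ at the level of $\pi_1$---is the entire substance of this direction, and it is precisely what your proposal black-boxes. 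You correctly flagged the ``main obstacle'' as lying in the structural analysis, but you placed it on the wrong side of the equivalence.
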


Here recall that a group $\Gamma$ is \emph{metabelian} if its second derived subgroup is trivial, i.e. if~$\Gamma^{(2)}=1$.

\begin{proof}[Proof of Theorem~\ref{thm:MainIntro} assuming Theorem~\ref{thm:Converse}]
Since we assumed that both summands of $\mathcal{A}_K$ satisfy the Ext condition,  Theorem~\ref{thm:Converse} implies that $K$ arises as the equatorial cross-section of a $2$-sphere~$S=D_1 \cup_K D_2 \subset S^4$ that satisfies~$\pi_1(S^4 \setminus S )\cong \Z$.
A result of Freedman now implies that~$S$ is topologically unknotted~\cite{Freedman} thus showing that~$K$ is doubly slice.
\end{proof}

\begin{remark}
\label{rem:Outline}
We outline the proof of Theorem~\ref{thm:Converse}.
\begin{itemize}
\item We start with the~$(2) \Rightarrow (1)$ direction.
Write $\mathcal{A}_{D_i}=H_1(D^4 \setminus D_i;\Lambda)$ for the Alexander module of the~$D_i$ and use $P_1,P_2$ to denote the summands of $\mathcal{A}_K$.
The first part of the proof consists of showing that~$\lbrace P_1,P_2 \rbrace=\lbrace P_{D_1},P_{D_2}\rbrace,$ where we write~$P_{D_i}:=\ker(\mathcal{A}_K \to \mathcal{A}_{D_i}).$
This step does not use the fact that the disc groups are metabelian; see Proposition~\ref{prop:GoalTechnical}.
The metabelian condition is used in the second step to show that the $D_i$ are~$G$-homotopy ribbon; see Proposition~\ref{prop:BS12}.
Once we know that the discs are $G$-homotopy ribbon, it follows from~\cite{FriedlTeichner,ConwayPowellDiscs} that the~$P_{D_i}$ (and thus the $P_i$) satisfy the Ext condition for~$i=1,2$.
\item 
Next we outline the proof of the~$(1) \Rightarrow (2)$ direction.
Since~$P_1$ and~$P_2$ both satisfy the Ext condition,  we apply~\cite{FriedlTeichner,ConwayPowellDiscs} to deduce that the knot~$K$ bounds~$G$-homotopy ribbon discs with $P_{D_i}=P_i$. 
Most of the proof is then devoted to showing that the sphere~$S=D_1 \cup_K D_2$ has knot group~$\Z$.
Note that $S$ is obtained as the union of two distinct discs with boundary $K$ whereas, for Alexander polynomial one knots, the $2$-sphere is obtained as a union of two isotopic discs.
\end{itemize}
\end{remark}

In particular, the proof of Theorem~\ref{thm:Converse} shows that if $D_1,D_2 \subset D^4$ are two $G$-homotopy ribbon discs with boundary $K$ and $P_{D_1} \neq P_{D_2}$,  then the $2$-sphere $D_1 \cup_K D_2 \subset S^4$ is topologically unknotted.
This leads to the following question in the smooth category.
\begin{question}
Let~$K$ be a knot with Alexander module~$\mathcal{A}_K \cong \Lambda/(t\unaryminus 2)  \oplus \Lambda/(t^{-1}\unaryminus 2)$.
If~$D_1,D_2 \subset~D^4$ are smoothly embedded $G$-homotopy ribbon discs with boundary~$K$ and~$P_{D_1} \neq P_{D_2}$, is the topologically unknotted sphere~$S=D_1 \cup_K D_2$ necessarily smoothly unknotted?
\end{question}


\subsection{An application to doubly slicing  satellite knots }
\label{sub:ApplicationIntro}

Let~$R \sqcup \eta \subset S^3$ be a~$2$-component link with~$\eta$ an unknot and~$R$ contained in the interior of the solid torus~$V=S^3 \setminus~\nu(\eta)$.
Let~$K \subset S^3$ be another knot and let~$h \colon V \to \overline{\nu}(K) \subset S^3$ be an orientation preserving homeomorphism taking the core~$c \subset V$ to~$K \subset \overline{\nu}(K)$ and a~$0$-framed longitude of~$c$ to a~$0$-framed longitude of~$K$.
The image of~$R$ under~$h$, denoted~$R_\eta(K)$, is the \emph{satellite knot} with \emph{pattern}~$R$,  \emph{companion}~$K$,  \emph{infection curve}~$\eta$ and \emph{winding number}~$n=\ell k(R,\eta)$.
When we write ``let~$R$ be a pattern and~$\eta \subset S^3 \setminus R$ be an infection curve",  it is understood that $\eta$ is unknotted in $S^3.$
We will sometimes also say that~$R_\eta(K)$ is obtained by \emph{infecting} $R$ along $\eta$ as e.g. in~\cite{CochranOrrTeichnerStructure}.

If~$R$ and~$K$ are both doubly slice, then~$R_\eta(K)$ is known to be doubly slice, see e.g.~\cite[Proposition~3.4]{Meier}.
We apply Theorem~\ref{thm:MainIntro} to show that when $\mathcal{A}_R \cong \Lambda/(t\unaryminus 2) \oplus \Lambda/(t^{-1}\unaryminus 2)$,  there is a criterion on $R$ which ensures that~$R_\eta(K)$ is doubly slice for \emph{any} knot~$K$.

\begin{theorem}
\label{thm:NewDoublySliceIntro}
Let~$R$ be a pattern and let~$\eta \subset S^3 \setminus R$ be an infection curve that lies in~$\pi_1(S^3 \setminus R)^{(2)}$.
If both summands of~$\mathcal{A}_{R}=\Lambda/(t\unaryminus 2) \oplus \Lambda/(t^{-1}\unaryminus 2)$ satisfy the Ext condition,  then~$R_\eta(K)$ is doubly slice for any knot~$K$.
\end{theorem}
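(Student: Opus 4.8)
The plan is to verify the hypotheses of Theorem~\ref{thm:MainIntro} for the satellite knot $R_\eta(K)$: namely that $\mathcal{A}_{R_\eta(K)} \cong \Lambda/(t\unaryminus 2)\oplus\Lambda/(t^{-1}\unaryminus 2)$ and that both of its summands satisfy the Ext condition. The engine is the observation that, because $\eta$ lies in $\pi_1(S^3\setminus R)^{(2)}$, infection along $\eta$ is invisible to any homomorphism with metabelian target, and in particular to the abelianisation $\pi_1 \to \Z$ and to each epimorphism $\phi_P\colon \pi_1 \to G=BS(1,2)$ (which annihilates $\pi^{(2)}$ since $G^{(2)}=1$). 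Note also that $\eta\in\pi^{(1)}$ forces the winding number $\ell k(R,\eta)$ to vanish.

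First I would set up the standard degree-one collapse map. Writing $E_K=S^3\setminus\nu(K)$, $E_{R\cup\eta}=S^3\setminus\nu(R\cup\eta)$ and $T_\eta=\partial\nu(\eta)$, the satellite exterior decomposes as $E_{R_\eta(K)}=E_{R\cup\eta}\cup_{T_\eta}E_K$, and I would use the degree-one map $\varphi\colon E_{R_\eta(K)}\to E_R$ that is the identity on $E_{R\cup\eta}$ and collapses $E_K$ onto the solid torus $\nu(\eta)$ by the usual homology equivalence sending $\mu_K$ to the core $\eta$. Passing to $0$-surgery (legitimate since $\eta$ is null-homologous in $E_R$, so the $0$-framing is preserved) gives a map $\varphi\colon M_{R_\eta(K)}\to M_R$ inducing an isomorphism on $H_1(\,\cdot\,;\Z)=\Z$.

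The heart of the argument is to show that, for $G$ either $\Z$ or $BS(1,2)$ and $\phi\colon\pi_1(M_R)\to G$ the relevant homomorphism, $\varphi$ induces an isomorphism $H_1(M_{R_\eta(K)};\Z[G]_{\phi\circ\varphi_*})\xrightarrow{\ \cong\ }H_1(M_R;\Z[G]_{\phi})$ of $\Z[G]$-modules. The key point is that $\phi(\eta)=1$, so the coefficient system $\phi\circ\varphi_*$ restricts trivially to $\pi_1(E_K)$; equivalently, in the $G$-cover of $M_R$ the preimage of $\nu(\eta)$ is a disjoint union $\bigsqcup_{g\in G}\nu(\eta)$, and infection replaces it by $\bigsqcup_{g\in G}E_K$. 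As each replacement is a $\Z$-homology equivalence carried out $G$-equivariantly, a Mayer--Vietoris comparison of the two decompositions together with the five-lemma yields the isomorphism. Specialising to $G=\Z$ gives $\mathcal{A}_{R_\eta(K)}\cong\mathcal{A}_R=\Lambda/(t\unaryminus 2)\oplus\Lambda/(t^{-1}\unaryminus 2)$ as $\Lambda$-modules; since this isomorphism is induced by $\varphi_*$ it is compatible with the splitting of~\eqref{eq:SESIntro} and hence carries summands to summands.

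Finally I would transfer the Ext condition. Because $\varphi_*$ induces an isomorphism of Alexander modules respecting the two summands, each summand $P$ of $\mathcal{A}_{R_\eta(K)}$ corresponds to a summand $P'$ of $\mathcal{A}_R$ with $\phi_P=\phi_{P'}\circ\varphi_*$, so the isomorphism above reads $H_1(M_{R_\eta(K)};\Z[G]_{\phi_P})\cong H_1(M_R;\Z[G]_{\phi_{P'}})$; by hypothesis the right-hand side satisfies the Ext condition, and Lemma~\ref{lem:ExtWellDef} ensures this is independent of the identifications made. Thus both summands of $\mathcal{A}_{R_\eta(K)}$ satisfy the Ext condition, and Theorem~\ref{thm:MainIntro} shows that $R_\eta(K)$ is doubly slice. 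I expect the main obstacle to lie in the $\Z[G]$-coefficient Mayer--Vietoris bookkeeping, namely checking that $\varphi$ is genuinely $\Z[G]$-equivariant and that the collapse $E_K\to\nu(\eta)$ is a homology isomorphism on every sheet of the cover, together with verifying that the correspondence of summands is compatible with the epimorphisms $\phi_P$.
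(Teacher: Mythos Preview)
Your proposal is correct and follows essentially the same route as the paper: construct the degree-one collapse map $M_{R_\eta(K)}\to M_R$, use that $\eta\in\pi^{(2)}$ forces $\phi(\eta)=1$ for any metabelian target, apply Mayer--Vietoris plus the five-lemma to identify the twisted $H_1$'s (first with $\Lambda$-coefficients to get the Alexander module, then with $\Z[G]$-coefficients), transfer the Ext condition along this isomorphism, and finish with Theorem~\ref{thm:MainIntro}. The only place you should tighten is the assertion ``$\phi_P=\phi_{P'}\circ\varphi_*$'': these agree only up to an automorphism of $G$ (arising from the choices of splittings and identifications on each side), which is exactly what the paper records in Lemma~\ref{lem:GroupAutomorphism}; your appeal to Lemma~\ref{lem:ExtWellDef} (together with Lemma~\ref{lem:TwistedHomology}) is the right way to absorb this discrepancy.
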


We describe an application of this result.
Consider the knot $\mathcal{R}^{T_1,T_2}$ obtained from $\mathcal{R}=9_{46}$ by satellite operations along the infections curves $\gamma_1,\gamma_2 \subset S^3  \setminus  \mathcal{R}$ illustrated in Figure~\ref{fig:946Satellite} and companions untwisted Whitehead doubles~$\operatorname{Wh}(T_1)$ and~$\operatorname{Wh}(T_2)$; either choice of clasp is acceptable.
We denote by $\mathcal{R}^{T_1,T_2}(K_1,K_2)$ the outcome of performing two additional satellite operations along the curves $\eta_1,\eta_2$ illustrated in Figure~\ref{fig:946Satellite} with companions~$K_1$ and~$K_2$.
Proposition~\ref{prop:Application} applies Theorem~\ref{thm:NewDoublySliceIntro} to show that $\mathcal{R}^{T_1,T_2}(K_1,K_2)$ is doubly slice for any $T_1,T_2,K_1,K_2$.
It seems likely that there are choices of $T_1,T_2,K_1,K_2$ that ensure that~$\mathcal{R}^{T_1,T_2}(K_1,K_2)$ is not smoothly slice (and thus not smoothly doubly slice), but we will not pursue this question further.

\begin{figure}[!htbp]
\centering
\captionsetup[subfigure]{}
\begin{subfigure}{0.45\textwidth}
\includegraphics[scale=0.3]{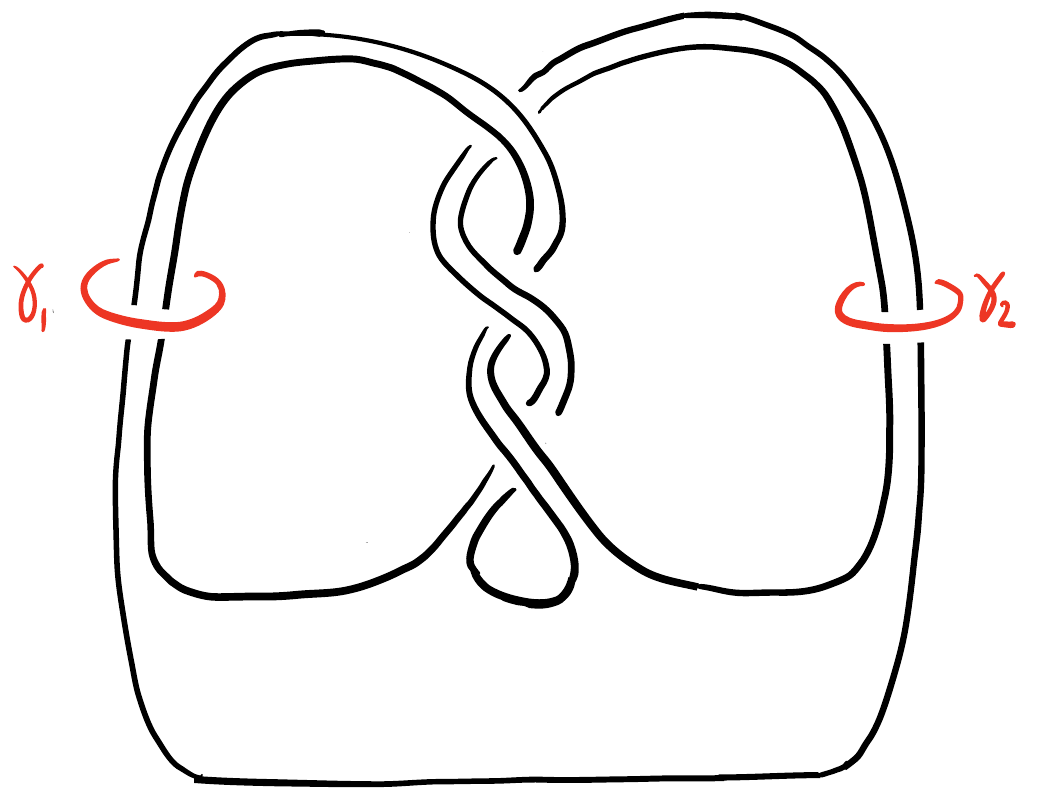}
\end{subfigure}
 \hspace{1cm}
\begin{subfigure}{.45\textwidth}
\includegraphics[scale=0.3]{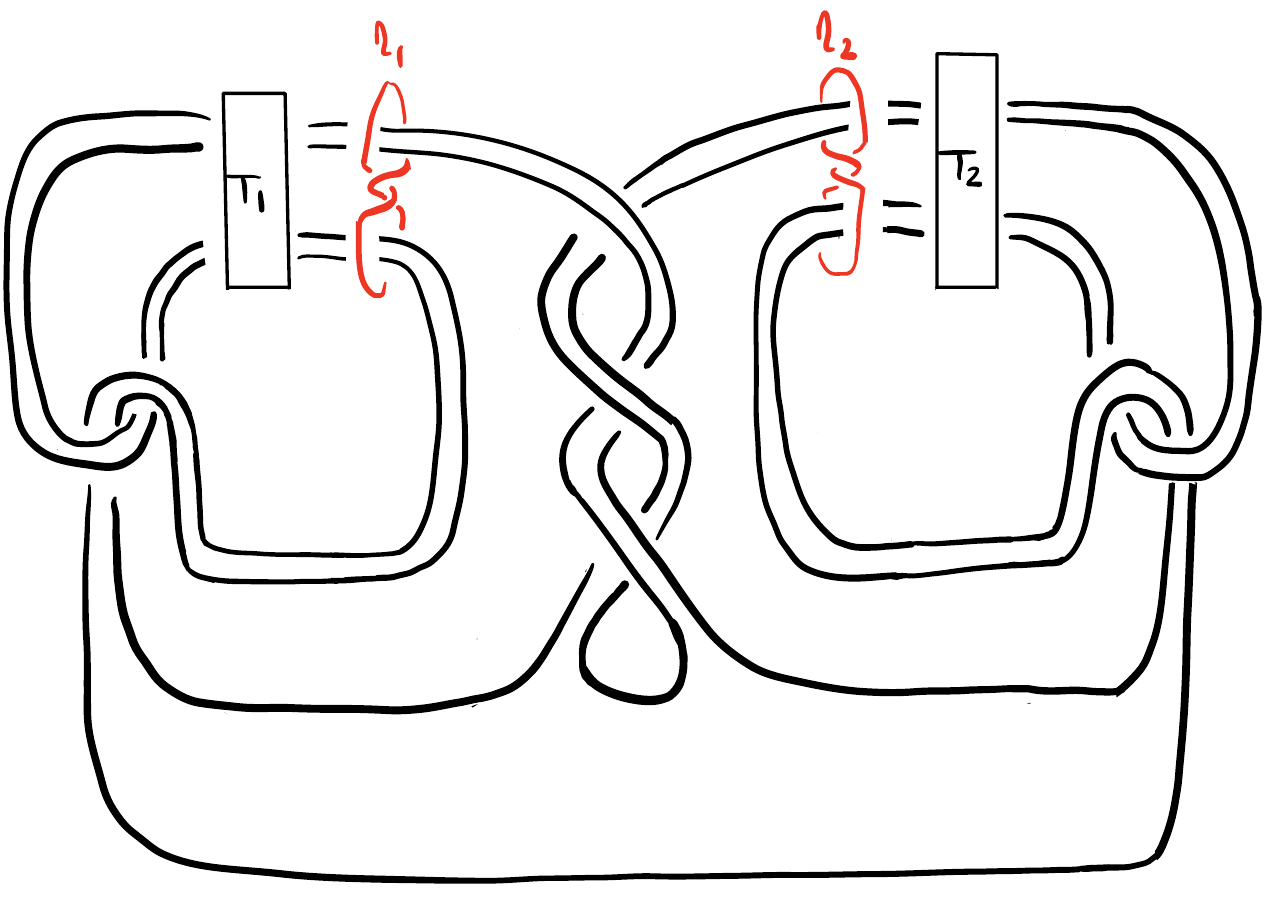}
\end{subfigure}
\caption{
On the left: the knot $\mathcal{R}=9_{46}$ together with two infections curves~$\gamma_1,\gamma_2 \subset S^3 \setminus \mathcal{R}$.
On the right: the knot $\mathcal{R}^{T_1,T_2}$ with the infection curves~$\eta_1,\eta_2$ that we use to produce $\mathcal{R}^{T_1,T_2}(K_1,K_2)$.}
\label{fig:ExampleDoublySlice}
\end{figure}

\begin{remark}
\label{rem:CFT}
It is likely that Theorem~\ref{thm:NewDoublySliceIntro} can be also be proved by applying~\cite{CochranFriedlTeichner} as follows.
The conditions on $R$ ensure that it bounds $G$-homotopy ribbon discs $D_1$  and~$D_2$~\cite{FriedlTeichner}.
Since~$\eta$ lies in~$ \pi_1(S^3 \setminus R)^{(2)}$ and $G$ is metabelian, $\eta$ is trivial in~$\pi_1(D^4 \setminus D_i) \cong G$.
Applying~\cite[Corollary~1.6]{CochranFriedlTeichner} to $D_1$ and $D_2$ leads to slice discs~$D_1'$ and $D_2'$ for~$R_\eta(K)$.
It seems likely that a close inspection of~\cite[proof of Theorem~1.5]{CochranFriedlTeichner} would show that the $D_i'$ are still~$G$-homotopy ribbon and that~$D_1' \cup_{R_\eta(K)} D_2'$ has knot group~$\Z$ and therefore (by Freedman~\cite{Freedman}) doubly slices $R_\eta(K)$.
I am grateful to Mark Powell for suggesting this.
It would be interesting to explore whether other applications of~\cite{CochranFriedlTeichner} lead to further constructions of doubly slice satellite links.
\end{remark}

\subsection*{Organisation}
Section~\ref{sec:Background} contains some background results on twisted homology,  the Ext condition and $G$-homotopy ribbon discs.
Section~\ref{sec:MainProof} is concerned with the proof of Theorem~\ref{thm:Converse} and Section~\ref{sec:Application} proves Theorem~\ref{thm:NewDoublySliceIntro}.

\subsection*{Conventions}

Throughout this article, we work in the topological category. 
Manifolds are assumed to be compact, connected, based and oriented; if a manifold has a nonempty boundary, then the basepoint is assumed to be in the boundary.
We denote the exterior of a knot $K \subset S^3$ by~$E_K:=S^3 \setminus \nu(K).$
We also set $\Lambda:=\Z[t^{\pm 1}]$ and $G:=BS(1,2)$.
Given Laurent polynomials~$p,q \in \Lambda$ we write $p \doteq q$ if $p=\pm t^k q$ for some $k \in \Z$.

\subsection*{Acknowledgments}

This research was partially supported by the NSF grant DMS\unaryminus 2303674.
Thanks go to Mark Powell for comments on a draft of this paper and to Lisa Piccirillo for heplful discussions. 
Finally, I am grateful to an anonymous referee for helpful comments and in particular for a suggestion that streamlined the proof of Theorem~\ref{thm:VanKampen}.

\section{Background}
\label{sec:Background}

This short section collects some background results on twisted homology, the Ext condition and the classification of $G$-homotopy ribbon discs.
Further references on twisted homology include~\cite{FriedlLectureNotes,
FriedlNagelOrsonPowell,KirkLivingston}, whereas we refer to~\cite{FriedlTeichner,ConwayPowellDiscs,ConwayDiscs} for more context surrounding the Ext condition.

\subsection{Twisted homology}
\label{sub:TwistedHomology}

In what follows,  spaces are assumed to have the homotopy type of a finite CW complex.
Given a space~$X$ and an epimorphism~$\psi \colon \pi_1(X) \twoheadrightarrow \Gamma$,  we write~$X_\psi$ for the~$\Gamma$-cover associated to~$\ker(\psi)$ and~$H_*(X;\Z[\Gamma]_\psi):=H_*(X_\psi)$.
The left action of~$\Gamma$ on~$X_\psi$ by deck transformations endows~$H_*(X;\Z[\Gamma]_\psi)$ with the structure of a left~$\Z[\Gamma]$-module.
Alternatively~$H_*(X;\Z[\Gamma]_\psi)$ can be defined as the homology of~$\Z[\Gamma]_\psi \otimes_{\Z[\pi_1(X)]} C_*(\widetilde{X})$, a chain complex that is chain isomorphic to~$C_*(X_\psi)$.
Here we used $\widetilde{X} \to X$ to denote the universal cover of $X$ and the right~$\Z[\pi_1(X)]$-module structure on~$\Z[\Gamma]_\psi$ is given by~$\gamma \cdot g=\gamma \psi(g)$ for $\gamma \in \Gamma$ and $g \in \pi_1(X)$.
It is this latter definition of~$H_*(X;\Z[\Gamma]_\psi)$ that we will be using most frequently.

\begin{lemma}
\label{lem:TwistedHomology}
Given a space $X$  and an epimorphism $\psi \colon \pi_1(X) \twoheadrightarrow \Gamma$, a group automorphism~$\varphi \colon \Gamma \to~\Gamma$ induces a~$\Z[\Gamma]$-isomorphism
$$H_*(X;\Z[\Gamma]_\psi ) \xrightarrow{\cong}  H_*(X;\Z[\Gamma]_{\varphi \circ \psi}).$$
\end{lemma}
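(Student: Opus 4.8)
The plan is to work entirely at the chain level, using the description $H_*(X;\Z[\Gamma]_\psi)=H_*\bigl(\Z[\Gamma]_\psi\otimes_{\Z[\pi_1(X)]}C_*(\widetilde{X})\bigr)$ recorded above, and to build the isomorphism from the ring automorphism that $\varphi$ induces on the coefficients. Conceptually this should be forced: since $\ker\psi=\ker(\varphi\circ\psi)$, the covers $X_\psi$ and $X_{\varphi\circ\psi}$ have the \emph{same} total space, and only the identification of the deck group with $\Gamma$ changes, by $\varphi$. This already predicts that the underlying homology groups agree and that the two $\Z[\Gamma]$-structures differ by the twist $\varphi$. Writing $\pi:=\pi_1(X)$ and also denoting by $\varphi$ the ring automorphism of $\Z[\Gamma]$ extending the group automorphism, my candidate map is $\Phi\otimes\id$, where $\Phi\colon\Z[\Gamma]_\psi\to\Z[\Gamma]_{\varphi\circ\psi}$ sends $x\mapsto\varphi(x)$.

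First I would check that $\Phi$ is an isomorphism of right $\Z[\pi]$-modules. The right action on $\Z[\Gamma]_\psi$ is $x\cdot g=x\,\psi(g)$ and on $\Z[\Gamma]_{\varphi\circ\psi}$ is $x\cdot g=x\,\varphi(\psi(g))$; since $\varphi$ is a ring homomorphism, $\Phi(x\,\psi(g))=\varphi(x)\,\varphi(\psi(g))=\Phi(x)\cdot g$, so $\Phi$ is right $\Z[\pi]$-linear, and it is bijective because $\varphi$ is a ring automorphism. Being an isomorphism of right $\Z[\pi]$-modules, $\Phi$ survives tensoring over $\Z[\pi]$ with the complex $C_*(\widetilde{X})$ of free left $\Z[\pi]$-modules, yielding a chain isomorphism
\[
\Phi\otimes\id\colon \Z[\Gamma]_\psi\otimes_{\Z[\pi]}C_*(\widetilde{X})\ \xrightarrow{\ \cong\ }\ \Z[\Gamma]_{\varphi\circ\psi}\otimes_{\Z[\pi]}C_*(\widetilde{X}),
\]
and hence isomorphisms on homology in every degree.

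The one point that requires care — and the only real content, hence the step I expect to be the main obstacle — is the behaviour with respect to the left $\Z[\Gamma]$-module structures, which is what makes the statement a \emph{$\Z[\Gamma]$-isomorphism induced by $\varphi$}. On a group element $\gamma'\in\Gamma$ one computes $\Phi(\gamma' x)=\varphi(\gamma')\,\varphi(x)=\varphi(\gamma')\,\Phi(x)$, so $\Phi\otimes\id$ intertwines left multiplication by $\gamma'$ with left multiplication by $\varphi(\gamma')$; that is, it is $\Z[\Gamma]$-linear precisely once the action on the target is transported to the source along $\varphi$. This compatibility-up-to-$\varphi$ is the sense in which $\varphi$ induces the asserted isomorphism, and it is exactly the form needed downstream: since $\varphi$ is a ring automorphism of $\Z[\Gamma]$ fixing $\Z[\Gamma]$ as a module over itself, this twist is an exact self-equivalence of $\Z[\Gamma]$-modules and therefore preserves the vanishing of $\operatorname{Ext}^1_{\Z[\Gamma]}(-,\Z[\Gamma])$, which is what lets one conclude that the Ext condition is independent of the chosen identification with $G=BS(1,2)$. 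The remaining work — tracking which structure is left and which is right, and confirming that $\Phi$ is genuinely $\pi$-linear on the right while only $\varphi$-semilinear on the left — is routine bookkeeping, so the proof should be short.
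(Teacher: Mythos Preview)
Your proposal is correct and follows essentially the same approach as the paper: define the chain map $\gamma\otimes\sigma\mapsto\varphi(\gamma)\otimes\sigma$ and check it is well defined on the tensor product, which is exactly your verification that $\Phi$ is right $\Z[\pi]$-linear. You are in fact more careful than the paper about the left $\Z[\Gamma]$-structure, correctly observing that the resulting map is $\varphi$-semilinear rather than strictly $\Z[\Gamma]$-linear; the paper glosses over this, but your reading (``induced by $\varphi$'') is the right one and is precisely what is used downstream for the Ext condition.
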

\begin{proof}
The lemma follows once one verifies that~$\varphi$ induces a well defined chain isomorphism
$$\Z[\Gamma]_\psi \otimes_{\Z[\pi_1(X)]} C_*(\widetilde{X})  \to \Z[\Gamma]_{\varphi \circ \psi} \otimes_{\Z[\pi_1(X)]} C_*(\widetilde{X}).$$
This chain map is well defined because~$\gamma  \otimes g \sigma$  and~$\gamma \psi(g) \otimes \sigma$ are mapped to the same element, namely~$\varphi(\gamma) \varphi(\psi(g)) \otimes \sigma$.
\end{proof}

\subsection{The Ext condition}
\label{sub:Ext}

Given a knot~$K$, write~$M_K$ for the closed~$3$-manifold obtained by~$0$-framed surgery on~$K$,~$\pi:=\pi_1(M_K)$ for the fundamental group of~$M_K$,  $\operatorname{ab}\colon \pi_K \twoheadrightarrow H_1(M_K) \cong \Z$ for abelianisation, and~$\mathcal{A}_K=H_1(M_K;\Lambda)$ for the Alexander module of~$K$.
Writing $\pi^{(0)}:=\pi$ and~$\pi^{(k)}:=[\pi^{(k-1)},\pi^{(k-1)}]$ when $k \geq 1$ for the derived series of~$\pi$,  recall from the introduction that the groups~$\mathcal{A}_K \cong \pi^{(1)}/\pi^{(2)}$ and~$H_1(M_K) \cong \pi/\pi^{(1)}$ fit into the short exact sequence
\begin{equation}
\label{eq:ExactSequencepi2}
1 \to  \mathcal{A}_K  \to \pi/\pi^{(2)} \xrightarrow{\operatorname{ab}} H_1(M_K) \to 1.
\end{equation}
Since~$H_1(M_K) \cong \Z$ is free, this short exact sequence splits.
The choice of a splitting~$\theta \colon \Z \to~\pi/\pi^{(2)}$ amounts to the choice of (the homotopy class of) a meridian $\mu_K \in \pi_1(M_K)$ as one can then set~$\theta(n):=[\mu_K^n]$.
One then verifies that~$\vartheta \colon \pi/\pi^{(2)} \xrightarrow{\cong} \Z \ltimes \mathcal{A}_K,  g \mapsto (\operatorname{ab}(g),[g\mu_K^{-\operatorname{ab}(g)}])$
is an isomorphism.
Here, the action of $\Z$ on~$\mathcal{A}_K \cong \pi^{(1)}/\pi^{(2)}$ is by~$n \cdot x=\mu_K^n x \mu_K^{-n}$ where~$n \in \Z$ and~$x \in \mathcal{A}_K$.
When we consider the semidirect product $\Z \ltimes \mathcal{A}_K$, it is with respect to this action.

Given a submodule~$P \leq \mathcal{A}_K$,  write~$\Gamma$ for the group underlying~$\Z \ltimes \mathcal{A}_K/P$.
The choice of a splitting of~\eqref{eq:ExactSequencepi2} and of an identification of~$\Z \ltimes \mathcal{A}_K/P$ with~$\Gamma$ gives rise to an epimorphism
$$ \phi_P \colon \pi \twoheadrightarrow \pi/\pi^{(2)} \xrightarrow{\cong,\vartheta} \Z \ltimes \mathcal{A}_K \twoheadrightarrow \Z \ltimes \mathcal{A}_K/P \cong \Gamma.$$
The next lemma will allow us to define the Ext condition mentioned in the introduction.
\begin{lemma}
\label{lem:ExtWellDef}
Given a knot $K$ and a submodule $P \leq \mathcal{A}_K$, the isomorphism type of the $\Z[\Gamma]$-module~$H_1(M_K;\Z[\Gamma]_{\phi_P})$ depends  neither on the splitting of~\eqref{eq:ExactSequencepi2} used to identify~$\pi/\pi^{(2)}$ with ~$\Z \ltimes \mathcal{A}_K$ nor on the automorphism of~$\Gamma$ used to identify~$\Z \ltimes \mathcal{A}_K/P$ with~$\Gamma$.

In particular the condition
$$ \operatorname{Ext}^1_{\Z[\Gamma]}(H_1(M_K;\Z[\Gamma]_{\phi_P}),\Z[\Gamma])=0$$
only depends on the knot~$K$ and on the submodule~$P$.
\end{lemma}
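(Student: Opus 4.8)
The plan is to reduce both independence statements to Lemma~\ref{lem:TwistedHomology}, after one preliminary observation: the target group $\Z \ltimes \mathcal{A}_K$ does not itself depend on the chosen meridian. Indeed, two splittings of~\eqref{eq:ExactSequencepi2} correspond to two meridians, and any two choices differ by an element of $\mathcal{A}_K$, say $\mu_K' = a\mu_K$ with $a \in \mathcal{A}_K$. Since $\mathcal{A}_K \cong \pi^{(1)}/\pi^{(2)}$ is abelian, conjugation by $a$ is trivial on $\mathcal{A}_K$, so $(\mu_K')^n x (\mu_K')^{-n} = \mu_K^n x \mu_K^{-n}$ for all $x \in \mathcal{A}_K$; thus both meridians induce the same $\Z$-action (namely the one coming from the $\Lambda$-module structure), and both isomorphisms $\vartheta$ land in the \emph{same} semidirect product.

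The independence of the identification is immediate. Replacing the chosen identification $\Z \ltimes \mathcal{A}_K/P \cong \Gamma$ by another one amounts to postcomposing $\phi_P$ with an automorphism $\varphi$ of $\Gamma$, and Lemma~\ref{lem:TwistedHomology} directly supplies a $\Z[\Gamma]$-isomorphism $H_1(M_K;\Z[\Gamma]_{\phi_P}) \cong H_1(M_K;\Z[\Gamma]_{\varphi \circ \phi_P})$.

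For the splitting, I would let $\vartheta, \vartheta' \colon \pi/\pi^{(2)} \xrightarrow{\cong} \Z \ltimes \mathcal{A}_K$ be the isomorphisms attached to $\mu_K$ and $\mu_K' = a\mu_K$. By the preliminary observation these share a codomain, so $\beta := \vartheta' \circ \vartheta^{-1}$ is a group automorphism of $\Z \ltimes \mathcal{A}_K$. The defining formula for $\vartheta$ gives $\vartheta(x) = \vartheta'(x) = (0,x)$ for every $x \in \mathcal{A}_K$, so $\beta$ restricts to the identity on the normal subgroup $\mathcal{A}_K$. In particular $\beta$ preserves the submodule $P$ and descends to an automorphism $\bar\beta$ of $\Z \ltimes \mathcal{A}_K/P$, which transports under the identification with $\Gamma$ to an automorphism $\tilde\beta$ of $\Gamma$. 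Chasing the quotient map then shows that the epimorphism associated to the second splitting equals $\tilde\beta \circ \phi_P$, and Lemma~\ref{lem:TwistedHomology} once more yields $H_1(M_K;\Z[\Gamma]_{\phi_P}) \cong H_1(M_K;\Z[\Gamma]_{\tilde\beta \circ \phi_P})$. Combining the two cases gives the claimed independence of the isomorphism type.

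The one step that needs genuine (if brief) care is the reduction in the splitting case, namely verifying that $\beta$ fixes $\mathcal{A}_K$ pointwise so that it descends to $\Gamma$ at all; everything else is formal once Lemma~\ref{lem:TwistedHomology} is in hand. The concluding ``in particular'' is then routine: isomorphic $\Z[\Gamma]$-modules have isomorphic $\operatorname{Ext}^1_{\Z[\Gamma]}(-,\Z[\Gamma])$, and the ring automorphism of $\Z[\Gamma]$ induced by $\tilde\beta$ carries the vanishing of this Ext group to the vanishing of its twist, so the condition $\operatorname{Ext}^1_{\Z[\Gamma]}(H_1(M_K;\Z[\Gamma]_{\phi_P}),\Z[\Gamma]) = 0$ depends only on $K$ and on $P$.
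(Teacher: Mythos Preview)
Your argument is correct and follows essentially the same route as the paper's proof: show that two choices of splitting differ by an automorphism of $\Z\ltimes\mathcal{A}_K$ that is the identity on $\mathcal{A}_K$, hence descends modulo $P$, and then invoke Lemma~\ref{lem:TwistedHomology}. The only difference is that where the paper cites \cite[Lemma~3.5]{ParkPowell} for the existence of such an automorphism, you supply the (short) direct verification that $\vartheta$ and $\vartheta'$ agree on $\mathcal{A}_K$, which is a nice self-contained touch; your preliminary observation that the two semidirect products coincide as groups is likewise part of what that citation covers.
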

\begin{proof}
Let~$\theta_1,\theta_2 \colon \Z \to \pi/\pi^{(2)}$ be two splittings of~\eqref{eq:ExactSequencepi2} and write~$\vartheta_1,\vartheta_2 \colon \pi/\pi^{(2)} \to \Z \ltimes \mathcal{A}_K$ for the resulting isomorphisms.
It is proved in~\cite[Lemma 3.5]{ParkPowell} that there exists an automorphism~$\varphi$ of~$\Z \ltimes \mathcal{A}_K$ that restricts to the identity on~$\mathcal{A}_K$ and makes the following square commute:
$$
\xymatrix@R0.5cm@C0.5cm{
\pi/\pi^{(2)} \ar[r]^-{\vartheta_1} \ar[d]^-=&  \Z \ltimes \mathcal{A}_K \ar[d]^{\varphi}  \\
\pi/\pi^{(2)} \ar[r]^-{\vartheta_2}&  \Z \ltimes \mathcal{A}_K.
}
$$
Since~$\varphi$ restricts to the identity on~$\mathcal{A}_K$, it takes~$P$ to itself and therefore descends to an isomorphism~$\Z \ltimes \mathcal{A}_K/P \to  \Z \ltimes \mathcal{A}_K/P$.
A choice of isomorphisms~$\nu_1,\nu_2 \colon  \Z \ltimes \mathcal{A}_K/P  \cong \Gamma$ then leads to an automorphism $\varphi_\Gamma \colon \Gamma \to \Gamma$ that makes the following diagram commute:
$$
\xymatrix@R0.5cm{
\pi  \ar[r]\ar[d]^= \ar@/^2pc/[rrrr]^-{\phi_P}& \pi/\pi^{(2)} \ar[r]_-{\vartheta_1,\cong} \ar[d]^=&  \Z \ltimes \mathcal{A}_K \ar[r]\ar[d]^{\varphi,\cong}& \Z \ltimes \mathcal{A}_K/P \ar[r]_-{\nu_1,\cong}\ar[d]^{\varphi,\cong}& \Gamma \ar[d]^{\varphi_\Gamma,\cong} \\
\pi  \ar[r] \ar@/_2pc/[rrrr]_-{\phi_P'}& \pi/\pi^{(2)} \ar[r]^-{\vartheta_2,\cong} &  \Z \ltimes \mathcal{A}_K \ar[r]& \Z \ltimes \mathcal{A}_K/P \ar[r]^-{\nu_2,\cong}& \Gamma.
}
$$
Since the automorphism~$\varphi_\Gamma \colon \Gamma\to \Gamma$ satisfies~$\varphi_\Gamma \circ \phi_P=\phi_{P}'$, the first assertion now follows from Lemma~\ref{lem:TwistedHomology}: $H_ 1(M_K;\Z[\Gamma]_{\phi_P})  \cong  H_ 1(M_K;\Z[\Gamma]_{\varphi_\Gamma \circ \phi_P}) =H_ 1(M_K;\Z[\Gamma]_{\phi_P'}).$
The second assertion follows from the first because the Ext functor is contravariant in its first variable.
\end{proof}

Lemma~\ref{lem:ExtWellDef} justifies the following definition.

\begin{definition}
\label{def:Ext}
Given a knot~$K$, a submodule~$P \subset \mathcal{A}_K$ \emph{satisfies the Ext condition} if 
$$ \operatorname{Ext}^1_{\Z[\Gamma]}(H_1(M_K;\Z[\Gamma]_{\phi_P}),\Z[\Gamma])=0,$$
where~$\Gamma:=\Z \ltimes \mathcal{A}_K/P$.
\end{definition}

\subsection{The classification of~$G$-homotopy ribbon discs}
\label{sub:Classification}

Set $G:=BS(1,2)$.
The combination of~\cite{FriedlTeichner} and~\cite{ConwayPowellDiscs} gives the classification of~$G$-homotopy ribbon discs for a knot~$K$ up to isotopy rel. boundary.
When~$K$ has Alexander module~$\mathcal{A}_K \cong \Lambda/(t\unaryminus 2) \oplus \Lambda/(t^{-1}\unaryminus 2)$~\cite[Theorem~1.6 and Remark 1.8]{ConwayPowellDiscs} proves that~$G$-homotopy-ribbon discs for~$K$ are in bijective correspondence with the elements of~$\lbrace P_1,P_2\rbrace :=\lbrace \Lambda/(t\unaryminus 2) \oplus 0,0 \oplus \Lambda/(t^{-1}\unaryminus 2) \rbrace$ that satisfy the Ext condition.
This set can have cardinality~$0,1$ or~$2$: all three scenarios arise~\cite[Subsection 1.4]{ConwayPowellDiscs}.

The bijection is explicit: it maps a~$G$-homotopy-ribbon disc~$D \subset D^4$ with boundary~$K$ to the submodule~$P_D=\ker(\mathcal{A}_K \to \mathcal{A}_D)$ where~$\mathcal{A}_D=H_1(D^4 \setminus \nu(D);\Lambda)$ denotes the Alexander module of~$D$.
Setting $N_D:=D^4 \setminus \nu(D)$, an equivalent perspective (taken in~\cite{ConwayDiscs}) is to think of the bijection as mapping $D$ to the inclusion induced map $\pi_1(M_K) \twoheadrightarrow \pi_1(N_D) \cong G$ considered up to post-composition with an automorphisms of~$G$.
In particular, for~$P_i$ as above,  there exists a~$G$-homotopy ribbon disc~$D_i \subset D^4$ with boundary~$K$ such that~$\phi_{P_i}$ and~$\pi_1(M_K) \twoheadrightarrow \pi_1(N_D)$ agree up to post-composition with an automorphism of~$G$.


\section{The necessary and sufficient conditions}
\label{sec:MainProof}

The goal of this section is to prove Theorem~\ref{thm:Converse} from the introduction.
In Subsection~\ref{sub:Lemmas} we collect some technical results that will be used in Subsection~\ref{sub:ProofMain} to prove Theorem~\ref{thm:Converse}.

\subsection{Preliminary lemmas}
\label{sub:Lemmas}

This section records two technical results (Propositions~\ref{prop:GoalTechnical} and~\ref{prop:BS12}) that will be needed to prove the $(2) \Rightarrow (1)$ direction of Theorem~\ref{thm:Converse}.
Throughout this section,  we write $N_D:=D^4 \setminus \nu(D)$ and $X_S:=S^4 \setminus \nu(S)$ for the exterior of discs and spheres respectively.
We also continue writing $\Lambda:=\Z[t^{\pm 1}]$ and $G:=BS(1,2)$.

\medbreak

Following~\cite{ParkPowell}, we say that a disc $D$ is \emph{$\Lambda$-homology ribbon} if the inclusion induced homomorphism~$\mathcal{A}_K \to \mathcal{A}_D$ is surjective.
The following lemma is implicit in~\cite[Proposition 2.10]{Kim} (see also~\cite[Lemma 3.3]{ParkPowell})
 but we provide the argument as it is short.

\begin{lemma}
\label{lem:DoublySlice}
If a knot $K \subset S^3$ is the equatorial cross-section of a sphere $S=D_1 \cup_K D_2 \subset S^4$ with $\pi_1(X_S) \cong \Z$, then 
\begin{enumerate}
\item the discs $D_1$ and $D_2$ are $\Lambda$-homology ribbon;
\item the inclusions $P_{D_1},P_{D_2} \subset \mathcal{A}_K$ induce an isomorphism
$ P_{D_1} \oplus P_{D_2} \xrightarrow{\cong} \mathcal{A}_K.$
 \end{enumerate}
\end{lemma}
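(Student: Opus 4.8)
We need to prove that if $K$ is the equatorial cross-section of a sphere $S=D_1\cup_K D_2\subset S^4$ with $\pi_1(X_S)\cong\Z$, then (1) each disc is $\Lambda$-homology ribbon, and (2) the two kernels $P_{D_1},P_{D_2}\subset\mathcal{A}_K$ split $\mathcal{A}_K$ as a direct sum. The plan is to run a Mayer--Vietoris argument on the infinite cyclic cover of $X_S$, using the decomposition $X_S = N_{D_1}\cup_{E_K\times[-1,1]} N_{D_2}$ (roughly: removing a neighbourhood of $S$ from $S^4=D^4_1\cup_{S^3}D^4_2$ exhibits $X_S$ as the union of the two disc exteriors glued along a copy of the knot exterior thickened in the equatorial $S^3$).

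**Proof of (1): $\Lambda$-homology ribbon.**

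First I would set up the coefficient system. Since $\pi_1(X_S)\cong\Z$, the abelianisation $\pi_1(X_S)\to\Z$ is an isomorphism, and every piece in the decomposition maps compatibly to $\Z$ (meridians of $K$, $D_1$, $D_2$ all map to the generator $t$). So I can take $\Lambda$-coefficients throughout and form the Mayer--Vietoris sequence
$$
\cdots \to H_1(E_K;\Lambda)\to \mathcal{A}_{D_1}\oplus \mathcal{A}_{D_2}\to H_1(X_S;\Lambda)\to H_0(E_K;\Lambda)\to\cdots,
$$
where I write $\mathcal{A}_{D_i}=H_1(N_{D_i};\Lambda)$ and $H_1(E_K;\Lambda)\cong\mathcal{A}_K$. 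The hypothesis $\pi_1(X_S)\cong\Z$ forces $H_1(X_S;\Lambda)=H_1(\Z\text{-cover of }X_S)=0$, since the infinite cyclic cover of $X_S$ is simply connected (its fundamental group is $\pi_1(X_S)^{(1)}=0$). The map $H_0(E_K;\Lambda)\to H_0(N_{D_1};\Lambda)\oplus H_0(N_{D_2};\Lambda)$ is the standard injective reduced-$H_0$ map (all three are $\Lambda/(t-1)\cong\Z$ with compatible augmentations), so the connecting map into $H_0$ vanishes. Exactness then yields that
$$
\mathcal{A}_K \xrightarrow{(i_1,-i_2)} \mathcal{A}_{D_1}\oplus \mathcal{A}_{D_2}
$$
is \emph{surjective}, where $i_j\colon\mathcal{A}_K\to\mathcal{A}_{D_j}$ are the inclusion-induced maps. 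Surjectivity of the sum forces each $i_j$ to be surjective (projecting onto each factor), which is exactly the $\Lambda$-homology ribbon condition for $D_1$ and $D_2$. This proves (1).

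**Proof of (2): the direct-sum splitting.**

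With (1) in hand, I set $P_{D_j}=\ker(i_j)$. The same Mayer--Vietoris sequence, now using $H_1(X_S;\Lambda)=0$ on the left, gives that the map $\mathcal{A}_K\to\mathcal{A}_{D_1}\oplus\mathcal{A}_{D_2}$ is also \emph{injective}: the image of $H_1(X_S;\Lambda)=0$ is zero, so $\ker=0$. Hence $(i_1,i_2)\colon\mathcal{A}_K\xrightarrow{\cong}\mathcal{A}_{D_1}\oplus\mathcal{A}_{D_2}$ is an isomorphism. Under this isomorphism, $\ker(i_1)=P_{D_1}$ maps isomorphically onto $0\oplus\mathcal{A}_{D_2}$ and $\ker(i_2)=P_{D_2}$ maps isomorphically onto $\mathcal{A}_{D_1}\oplus 0$; these are complementary summands, so the inclusions induce the desired isomorphism $P_{D_1}\oplus P_{D_2}\xrightarrow{\cong}\mathcal{A}_K$.

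**The main obstacle.**

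The real work is justifying the geometric decomposition and the coefficient bookkeeping, not the algebra. Specifically, I would need to argue carefully that $X_S$ decomposes as $N_{D_1}$ and $N_{D_2}$ glued along $E_K\times I$ (a collar of the equatorial knot exterior), that the three $\Z$-covers are compatible, and — most importantly — that $H_*(X_S;\Lambda)$ vanishes in the relevant degrees. The cleanest route for the vanishing is the observation that $\pi_1(X_S)\cong\Z$ implies the infinite cyclic cover $\widetilde{X_S}$ is simply connected, hence $H_1(X_S;\Lambda)=H_1(\widetilde{X_S})=0$; combined with $X_S$ being a homology $S^1\times D^3$ one also gets control on $H_2$. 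The subtlety is that surjectivity of each $i_j$ in (1) needs the connecting homomorphism to $H_0$ to vanish, which I verify via the augmentation, and injectivity in (2) needs $H_1(X_S;\Lambda)=0$; getting both the precise Mayer--Vietoris indexing and the reduced-versus-unreduced $H_0$ terms right is where care is required.
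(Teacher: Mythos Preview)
Your overall strategy matches the paper's: run Mayer--Vietoris with $\Lambda$-coefficients on the decomposition $X_S=N_{D_1}\cup_{E_K}N_{D_2}$ and use the hypothesis $\pi_1(X_S)\cong\Z$ to force the relevant $\Lambda$-homology of $X_S$ to vanish. Part~(1) is fine as written.

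There is, however, a genuine gap in your argument for part~(2). Injectivity of $\mathcal{A}_K\to\mathcal{A}_{D_1}\oplus\mathcal{A}_{D_2}$ is governed by the term \emph{to the left} of $H_1(E_K;\Lambda)$ in the Mayer--Vietoris sequence, namely $H_2(X_S;\Lambda)$, not by $H_1(X_S;\Lambda)$. The exact sequence reads
\[
\cdots\to H_2(X_S;\Lambda)\to H_1(E_K;\Lambda)\to \mathcal{A}_{D_1}\oplus\mathcal{A}_{D_2}\to H_1(X_S;\Lambda)\to\cdots,
\]
so $\ker\bigl(\mathcal{A}_K\to\mathcal{A}_{D_1}\oplus\mathcal{A}_{D_2}\bigr)$ equals the image of $H_2(X_S;\Lambda)$. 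Your sentence ``the image of $H_1(X_S;\Lambda)=0$ is zero, so $\ker=0$'' is therefore incorrect: you have already spent $H_1(X_S;\Lambda)=0$ on surjectivity, and it does nothing for injectivity. You acknowledge in your final paragraph that one ``gets control on $H_2$'', but you never actually supply this step, and it is not automatic from $X_S$ being an integral homology $S^1\times D^3$.

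The paper fills this gap as follows. From $\pi_1(X_S)\cong\Z$ one obtains $H_1(X_S;\Lambda)=0=H_3(X_S;\Lambda)$ and that $H_2(X_S;\Lambda)$ is a free $\Lambda$-module (this last fact uses Poincar\'e--Lefschetz duality and universal coefficients; the paper cites \cite[Lemma~3.2]{ConwayPowell}). On the other hand $\chi(X_S)=0$ forces $H_2(X_S;\Lambda)$ to be $\Lambda$-torsion, whence $H_2(X_S;\Lambda)=0$. With both $H_1$ and $H_2$ vanishing, Mayer--Vietoris gives the isomorphism $\mathcal{A}_K\xrightarrow{\cong}\mathcal{A}_{D_1}\oplus\mathcal{A}_{D_2}$ in one stroke, and your final paragraph deducing $P_{D_1}\oplus P_{D_2}\cong\mathcal{A}_K$ from this then goes through verbatim.
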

\begin{proof}
Since $\pi_1(X_S)\cong \Z$ we deduce that~$H_1(X_S;\Lambda)=0=H_3(X_S;\Lambda)=0$ and that~$H_2(X_S;\Lambda)$ is free~\cite[Lemma 3.2]{ConwayPowell}.
On the other hand, since $\chi(X_S)=0$, it follows that $H_2(X_S;\Lambda)$ is~$\Lambda$-torsion.
We therefore conclude that~$H_2(X_S;\Lambda)=0$.
The Mayer-Vietoris sequence for the decomposition~$X_S=N_{D_1} \cup_{S^3 \setminus \nu(K)}  N_{D_2}$ then shows that the inclusions induce an isomorphism
$$\bsm i_{D_1} \\ i_{D_2} \esm \colon \mathcal{A}_K \xrightarrow{\cong} \mathcal{A}_{D_1} \oplus \mathcal{A}_{D_2}.$$
This implies in particular that $i_{D_1}$ and $i_{D_2}$ are surjective, i.e. that the $D_i$ are $\Lambda$-homology ribbon.
A quick verification then shows that these inclusions induce isomorphisms $i_{D_2} \colon P_{D_1} \xrightarrow{\cong} \mathcal{A}_{D_2}$ and~$i_{D_1} \colon P_{D_2} \xrightarrow{\cong} \mathcal{A}_{D_1}$.
The lemma now follows readily.
\end{proof}

Lemma~\ref{lem:DoublySlice} shows that the properties of $\Lambda$-homology ribbon discs are relevant to the study of doubly slice knots.
We start by describing a condition which ensures that a $\Lambda$-homology ribbon disc is in fact homotopy ribbon.

\begin{proposition}
\label{prop:HomotopyRibbon}
A $\Lambda$-homology ribbon disc $D$ with metabelian disc group is homotopy ribbon.
\end{proposition}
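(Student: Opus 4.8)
The plan is to write $G_K := \pi_1(S^3 \setminus K)$ and $P := \pi_1(N_D)$ and to study the inclusion-induced homomorphism $f \colon G_K \to P$; by definition $D$ is homotopy ribbon precisely when $f$ is surjective. The whole point will be that the hypothesis $P^{(2)} = 1$ allows one to recover the full commutator subgroup $P^{(1)}$ from the abelian group $\mathcal{A}_D$, after which surjectivity follows formally.

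First I would assemble the two homological inputs. Since $D$ is a slice disc, Alexander duality gives $H_1(N_D) \cong \Z$, generated by a meridian of $D$; as $H_1(N_D) = P/P^{(1)}$, the kernel of the abelianisation $P \to \Z$ is exactly $P^{(1)}$ and the corresponding infinite cyclic cover has fundamental group $P^{(1)}$. Consequently $\mathcal{A}_D = H_1(N_D;\Lambda) = P^{(1)}/P^{(2)}$, and here the metabelian hypothesis enters decisively: $P^{(2)} = 1$ forces the identification $\mathcal{A}_D = P^{(1)}$. In the same way $\mathcal{A}_K \cong H_1(E_K;\Lambda) = G_K^{(1)}/G_K^{(2)}$, and the map $f_* \colon \mathcal{A}_K \to \mathcal{A}_D$ appearing in the definition of $\Lambda$-homology ribbon is the one induced by $f$.

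Next I would translate both hypotheses into statements about $f$. The $\Lambda$-homology ribbon condition says $f_*$ is onto; reading this through $\mathcal{A}_D = P^{(1)}$ gives $f(G_K^{(1)}) \cdot P^{(2)} = P^{(1)}$, and since $P^{(2)} = 1$ we obtain $f(G_K^{(1)}) = P^{(1)}$, whence $P^{(1)} \subseteq \operatorname{im}(f)$. Independently, the meridian of $K$ maps under $f$ to a meridian of $D$, which generates $H_1(N_D) = P/P^{(1)}$; hence $f(G_K)\cdot P^{(1)} = P$.

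Combining the two finishes the argument: since $P^{(1)} \subseteq f(G_K)$, the identity $f(G_K)\cdot P^{(1)} = P$ collapses to $f(G_K) = P$, so $f$ is surjective and $D$ is homotopy ribbon. I expect the only genuinely delicate point to be the identification $\mathcal{A}_D \cong P^{(1)}$, which is exactly where metabelicity is indispensable: without it one recovers only $f(G_K^{(1)}) \cdot P^{(2)} = P^{(1)}$, from which $P^{(1)} \subseteq \operatorname{im}(f)$ cannot be inferred. Everything else is formal manipulation of the extension $1 \to P^{(1)} \to P \to \Z \to 1$.
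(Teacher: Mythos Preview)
Your proof is correct and follows essentially the same approach as the paper: both arguments use that metabelianity identifies $\mathcal{A}_D$ with $\pi_1(N_D)^{(1)}$, combine the surjection $\mathcal{A}_K \twoheadrightarrow \mathcal{A}_D$ with the surjection on abelianisations, and conclude via the extension $1 \to P^{(1)} \to P \to \Z \to 1$. The only cosmetic differences are that the paper works with $\pi_1(M_K)$ rather than $\pi_1(E_K)$ and packages the two-out-of-three step into an explicit commutative diagram of semidirect product decompositions $\Z \ltimes \mathcal{A}_K \to \Z \ltimes \mathcal{A}_D$, whereas you argue directly with images of subgroups.
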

\begin{proof}
Set $\pi_K:=\pi_1(M_K)$ and $\pi_D:=\pi_1(N_D)$,  write $\mathcal{A}_K$ and $\mathcal{A}_D$ for the Alexander modules of $K$ and $D$ respectively,  and consider the following commutative diagram in which the rows are exact:
$$
\xymatrix@R0.5cm@C0.5cm{
1 \ar[r]&\mathcal{A}_K \ar[r]\ar@{->>}[d]& \pi_K/\pi_K^{(2)} \ar[r]^-{\operatorname{ab}}\ar[d]^{\pi_1(\iota)}& \Z\ar[r]\ar[d]_\cong^{H_1(\iota)}&1 \\
1\ar[r]&\mathcal{A}_D \ar[r]& \pi_D/\pi_D^{(2)} \ar[r]^-{\operatorname{ab}}& \Z\ar[r]&1.
}
$$
Both rows split because $\Z$ is free.
A splitting $\theta_K \colon \Z \to \pi_K/\pi_K^{(2)}$ of the top row then defines a splitting~$\theta_D \colon \Z \to \pi_D/\pi_D^{(2)}$ of the bottom row via the formula $\theta_D:=\pi_1(\iota) \circ s_K \circ H_1(\iota)^{-1}$.
More explicitly,  if~$\theta_K(n)=[\mu_K^n]$ for some meridian $\mu_K \in \pi_1(M_K)$, then $\theta_D(n)=[\mu_D^n]$ where~$\mu_D \in \pi_1(N_D)$ is a meridian that satisfies~$\pi_1(\iota)(\mu_K)=\mu_D$.
A rapid verification shows that the resulting isomorphisms~$\vartheta_K \colon \pi_K/\pi_K^{(2)} \to \Z \ltimes \mathcal{A}_K,g \mapsto (\operatorname{ab}(g),[g\mu_K^{-\operatorname{ab}(g)}])$ and~$\pi_D/\pi_D^{(2)} \to \Z \ltimes \mathcal{A}_D,g \mapsto (\operatorname{ab}(g),[g\mu_D^{-\operatorname{ab}(g)}])$ then make the following diagram commute:
$$
\xymatrix@R0.5cm{
\pi_K  \ar@{->>}[r]\ar[d]^{\pi_1(\iota)} & \pi_K/\pi^{(2)}_K \ar[r]^-{\vartheta_K,\cong} \ar[d]^{\pi_1(\iota)}&  \ar@{->>}[d]^{}\Z \ltimes \mathcal{A}_K & \\
\pi_D  \ar[r]^-= & \pi_D/\pi_D^{(2)} \ar[r]^-{\vartheta_D,\cong} &  \Z \ltimes \mathcal{A}_D.&  
}
$$
The bottom left arrow is an equality because we assumed that $\pi_D^{(2)}=1$.
The right arrow is surjective because $D$ is $\Lambda$-homology ribbon.
The commutativity of this diagram implies that the leftmost map labelled $\pi_1(\iota)$ is surjective i.e. that~$D$ is homotopy ribbon.
\end{proof}

The next lemma records some additional facts about $\Lambda$-homology ribbon discs.
\begin{lemma}
\label{lem:HomologyRibbon}
If $D$ is a $\Lambda$-homology ribbon disc, then 
\begin{enumerate}
\item the submodule $P_D \leq \mathcal{A}_K$ fits into the short exact sequence
\begin{equation}
\label{eq:SESPD}
0 \to P_D \to \mathcal{A}_K \to \mathcal{A}_D \to  0;
\end{equation}
\item $P_D$ has projective dimension at most $1$ as a $\Lambda$-module,  and is torsion free as an abelian~group.
\end{enumerate}
\end{lemma}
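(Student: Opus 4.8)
The plan is to dispatch part (1) formally and then reduce part (2) to two standard homological facts about $\Lambda=\Z[t^{\pm 1}]$ together with the structure of the Alexander module $\mathcal A_K$.

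For part (1) there is essentially nothing to do beyond unwinding definitions. By definition $P_D=\ker(i_D)$, where $i_D\colon \mathcal A_K\to\mathcal A_D$ is the inclusion-induced map, and the hypothesis that $D$ is $\Lambda$-homology ribbon says precisely that $i_D$ is surjective. Hence $0\to P_D\to\mathcal A_K\xrightarrow{i_D}\mathcal A_D\to 0$ is exact, which is exactly \eqref{eq:SESPD}; in particular this identifies the cokernel of $i_D$ with $\mathcal A_D$, a point I will use immediately below.

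For the projective dimension bound I would feed \eqref{eq:SESPD} into the standard inequality $\operatorname{pd}(A)\le\max\{\operatorname{pd}(B),\operatorname{pd}(C)-1\}$ valid for any short exact sequence $0\to A\to B\to C\to 0$. With $A=P_D$, $B=\mathcal A_K$, $C=\mathcal A_D$ this yields $\operatorname{pd}_\Lambda(P_D)\le\max\{\operatorname{pd}_\Lambda(\mathcal A_K),\operatorname{pd}_\Lambda(\mathcal A_D)-1\}$. Two inputs then close the argument. First, $\operatorname{pd}_\Lambda(\mathcal A_K)\le 1$: under the standing assumption of this section $\mathcal A_K\cong\Z[\tfrac12]\oplus\Z[\tfrac12]$ has the explicit length-one free resolution $0\to\Lambda^2\to\Lambda^2\to\mathcal A_K\to 0$ (and more generally the knot Alexander module admits a square presentation matrix of nonzero determinant). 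Second, $\operatorname{pd}_\Lambda(\mathcal A_D)\le 2$ is automatic, since $\Lambda$ is the Laurent polynomial ring over $\Z$ and hence has global dimension $\operatorname{gldim}(\Z)+1=2$, a bound on the projective dimension of every $\Lambda$-module. Substituting gives $\operatorname{pd}_\Lambda(P_D)\le\max\{1,1\}=1$.

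For $\Z$-torsion-freeness I would argue separately, since $P_D$ is a subgroup of $\mathcal A_K$ (being a submodule) and $\mathcal A_K$ is itself $\Z$-torsion-free, and a subgroup of a torsion-free abelian group is torsion-free. Under the standing assumption $\mathcal A_K\cong\Z[\tfrac12]\oplus\Z[\tfrac12]$ this last point is visible; for a general knot one would instead use that the Alexander polynomial is primitive, so that the integral presentation matrix stays injective modulo every prime and $\operatorname{Tor}^{\Z}_1(\mathcal A_K,\Z/p)=0$. The one genuine subtlety — the closest thing to an obstacle — is recognising that this conclusion cannot be extracted from the projective-dimension bound of the previous paragraph: for instance $\Lambda/(2)$ has projective dimension one yet is entirely $2$-torsion as an abelian group. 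Thus the $\Z$-torsion-freeness of $P_D$ must be traced back to a property of $\mathcal A_K$ rather than to $P_D$ merely being a submodule of a module of projective dimension at most one, and I would flag this explicitly so that the two clauses of part (2) are seen to rest on independent grounds.
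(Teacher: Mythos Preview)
Your proposal is correct and follows essentially the same approach as the paper: both deduce $\operatorname{pd}_\Lambda(P_D)\le 1$ from $\operatorname{pd}_\Lambda(\mathcal A_K)\le 1$, $\operatorname{gldim}(\Lambda)=2$, and the Ext long exact sequence associated to~\eqref{eq:SESPD} (you package this as the inequality $\operatorname{pd}(A)\le\max\{\operatorname{pd}(B),\operatorname{pd}(C)-1\}$, the paper writes the sequence out), and both obtain $\Z$-torsion-freeness from that of $\mathcal A_K$. The only cosmetic difference is in part~(1): you unwind the definition of $\Lambda$-homology ribbon directly, while the paper routes through $H_1(N_D,\partial N_D;\Lambda)=0$ and the long exact sequence of the pair, which amounts to the same thing.
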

\begin{proof}
Since $D$ is $\Lambda$-homology ribbon, we deduce that~$H_1(N_D,\partial N_D;\Lambda)=0$.
The first assertion now follows by considering the long exact sequence of the pair $(N_D,\partial N_D)$. 
We prove the second assertion.
Since $\mathcal{A}_K$ is $\Lambda$-torsion and admits a square presentation matrix~\cite{LevineKnotModules}, its projective dimension is at most $1$.
Since $\Lambda$ has global dimension $2$,  we deduce that $\mathcal{A}_D$ has projective dimension at most~$2$.
The exact sequence from~\eqref{eq:SESPD} now implies that $P_D$ has projective dimension at most $1$,  for if $V$ is a $\Lambda$-module, then the exact sequence
$$0\leftarrow  \underbrace{\operatorname{Ext}_\Lambda^3(\mathcal{A}_D,V)}_{=0}  \leftarrow \operatorname{Ext}_\Lambda^2(P_D,V)  \leftarrow \underbrace{\operatorname{Ext}_\Lambda^2(\mathcal{A}_K,V)}_{=0} \leftarrow \operatorname{Ext}_\Lambda^2(\mathcal{A}_D,V) \leftarrow \ldots $$
implies that $\operatorname{Ext}^i_\Lambda(P_D,V)=0$ for $i \geq 2$.
Finally, the fact that $P_D \leq \mathcal{A}_K$ is $\Z$-torsion free follows because~$\mathcal{A}_K$ is $\Z$-torsion free~\cite[Proposition~3.5]{LevineKnotModules}.
\end{proof}

\begin{remark}
It is tempting to conjecture that if a torsion $\Lambda$-module $P$ is $\Z$-torsion free, admits a square presentation matrix and satisfies~$\operatorname{Ord}(P)\doteq  t\unaryminus 2$, then $P \cong \Lambda/(t\unaryminus 2)$.
This would simplify the remainder of this section as well as imply that if a doubly slice knot has Alexander polynomial~$(t\unaryminus2)(t^{-1}\unaryminus2)$, then it necessarily has Alexander module~$\mathcal{A}_K \cong \Lambda/(t\unaryminus 2) \oplus \Lambda/(t^{-1}\unaryminus 2)$.
\end{remark}

\begin{lemma}
\label{lem:Subset}
Let $K$ be a knot with Alexander module~$\mathcal{A}_K \cong \Lambda/(t\unaryminus 2) \oplus \Lambda/(t^{-1}\unaryminus 2)$, and let $D$ be a~$\Lambda$-homology ribbon disc with boundary $K$.
\begin{enumerate}
\item  If $\operatorname{Ord}(P_D)\doteq t\unaryminus 2$, then $P_D \subset  \Lambda/(t\unaryminus 2)  \oplus 0.$
\item If $\operatorname{Ord}(P_D)\doteq  t^{-1}\unaryminus 2$, then $P_D \subset 0\oplus \Lambda/(t^{-1}\unaryminus 2).$
\end{enumerate}
\end{lemma}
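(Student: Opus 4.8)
The plan is to project $P_D$ onto the ``wrong'' summand of $\mathcal{A}_K$ and show, via order ideals over the UFD $\Lambda$, that this projection vanishes. I will treat part (1); part (2) is identical after interchanging the two summands. Write $A_1:=\Lambda/(t-2)\oplus 0$ and $A_2:=0\oplus\Lambda/(t^{-1}-2)$, so $\mathcal{A}_K=A_1\oplus A_2$, and let $\pi_2\colon\mathcal{A}_K\twoheadrightarrow A_2$ denote the projection. The assertion $P_D\subset A_1$ is then exactly the statement $\pi_2(P_D)=0$, and I would prove this by contradiction.

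First I would record the two facts about order ideals that drive the argument. Since $\mathcal{A}_K$ is a finitely generated torsion module over the Noetherian ring $\Lambda$, its submodule $P_D$ is again finitely generated and torsion, so the order ideal is defined and is \emph{multiplicative} along short exact sequences of such modules; applied to
$$0\to P_D\cap A_1\to P_D\xrightarrow{\pi_2}\pi_2(P_D)\to 0$$
this gives $\operatorname{Ord}(P_D)\doteq\operatorname{Ord}(P_D\cap A_1)\cdot\operatorname{Ord}(\pi_2(P_D))$. Second, as an abelian group each summand is a copy of $\Z[\tfrac12]$, with $t$ acting by multiplication by $2$ on $A_1$ and by $\tfrac12$ on $A_2$; in particular $A_2\cong\Z[\tfrac12]$ is a module over the PID $\Z[\tfrac12]$, and every nonzero ideal of $\Z[\tfrac12]$ is isomorphic to $\Z[\tfrac12]$ itself. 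Hence a $\Lambda$-submodule $N\leq A_2$ satisfies $\operatorname{Ord}(N)\doteq t^{-1}-2$ if $N\neq 0$ and $\operatorname{Ord}(N)\doteq 1$ if $N=0$; symmetrically every submodule of $A_1$ has order ideal $\doteq 1$ or $\doteq t-2$.

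The conclusion is then a one-line divisibility check. By hypothesis $\operatorname{Ord}(P_D)\doteq t-2$, so from the displayed multiplicativity the factor $\operatorname{Ord}(\pi_2(P_D))$ must divide $t-2$. If $\pi_2(P_D)$ were nonzero this factor would be $\doteq t^{-1}-2$, forcing $t^{-1}-2$ to divide $t-2$ in $\Lambda$. But $t-2$ and $t^{-1}-2\doteq 2t-1$ are non-associate irreducibles of $\Lambda$, since they have distinct roots $t=2$ and $t=\tfrac12$; this is a contradiction. Therefore $\pi_2(P_D)=0$, i.e. $P_D\subset A_1=\Lambda/(t-2)\oplus 0$, which is (1), and (2) follows by the symmetric computation.

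The one step I would check most carefully is the determination of the order ideals of submodules of a single summand: one must know that a \emph{nonzero} submodule of $A_2$ still has order ideal $\doteq t^{-1}-2$, rather than being a genuinely smaller, $\Lambda$-finite module of trivial order. This is guaranteed by the $\Z$-torsion-freeness of $A_2\cong\Z[\tfrac12]$ (consistent with Lemma~\ref{lem:HomologyRibbon}(2)), which is what rules out the spurious value $\operatorname{Ord}(\pi_2(P_D))\doteq 1$ for a nonzero projection; everything else in the argument is formal.
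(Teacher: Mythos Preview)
Your argument is correct and takes a genuinely different route from the paper's proof. The paper first shows that $t-2$ \emph{annihilates} $P_D$: it uses Lemma~\ref{lem:HomologyRibbon} to deduce that $P_D$ has projective dimension at most $1$ and hence admits a square presentation matrix, then invokes the general fact that the order of such a module annihilates it. With $(t-2)P_D=0$ in hand, the paper checks element-by-element that the second coordinate of any $([p_1],[p_2])\in P_D$ vanishes, using coprimality of $t-2$ and $t^{-1}-2$ in the UFD $\Lambda$. Your approach bypasses the annihilation step entirely: you observe that every nonzero $\Lambda$-submodule of $A_2\cong\Z[\tfrac12]$ is isomorphic to $A_2$ itself (since $\Z[\tfrac12]$ is a PID) and so has order $\doteq t^{-1}-2$, and then use multiplicativity of orders on $0\to P_D\cap A_1\to P_D\to\pi_2(P_D)\to 0$ to force $\pi_2(P_D)=0$. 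Both proofs ultimately hinge on the same coprimality; yours trades the ``order annihilates'' lemma for the ideal structure of $\Z[\tfrac12]$ and multiplicativity of orders, which is arguably more structural.

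One point worth tightening: your sentence ``the order ideal is defined and is multiplicative along short exact sequences of such modules'' is stated for arbitrary finitely generated torsion $\Lambda$-modules, which is not quite right for the Fitting-ideal definition. The cleanest justification here is either to localise at height-one primes (where multiplicativity reduces to additivity of length over a DVR), or to note that all three terms in your sequence have projective dimension at most $1$---the outer terms because they are zero or isomorphic to a cyclic module $\Lambda/(p)$, and $P_D$ by Lemma~\ref{lem:HomologyRibbon}---so that determinantal orders are well defined and multiply. Your final paragraph gestures at Lemma~\ref{lem:HomologyRibbon} but for a slightly different purpose; it would be better deployed here.
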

\begin{proof}
We only prove the first assertion as the argument for the second is analogous.
We first claim that if $\operatorname{Ord}(P_D)\doteq  t\unaryminus 2$,  then $t\unaryminus 2$ annihilates $P_D$.
Lemma~\ref{lem:HomologyRibbon} implies that~$P_D$ is torsion and has projective dimension $1$,  and therefore admits a square presentation matrix.
It is known that if a torsion~$\Lambda$-module~$H$ admits a square presentation matrix,  then~$\operatorname{Ord}(H)$ annihilates~$H$~\cite[Remark 2 on page 31]{HillmanAlexanderIdeals}.
Thus $\operatorname{Ord}(P_D)\doteq  t\unaryminus 2$ annihilates $P_D$,  as claimed.

The lemma now follows from the same argument as in~\cite[proof of Lemma 4.3 (2)]{ConwayPowellDiscs}.
We repeat the details for the reader's convenience.
Given $([p_1],[p_2]) \in P_D \leq \mathcal{A}_K$, our aim is to prove that~$[p_2]=0 \in \Lambda/(t^{-1}\unaryminus 2)$.
Using the claim, we deduce that~$(t\unaryminus 2)([p_1],[p_2])=0$ and in particular~$[(t\unaryminus 2)p_2]=0$ in~$\Lambda/(t^{-1}\unaryminus 2)$.
This implies that~$(t\unaryminus 2)p_2=(t^{-1}\unaryminus 2)x$ for some~$x \in \Lambda$.
Since~$\Lambda$ is an UFD and since~$(t\unaryminus 2)$ and~$(t^{-1}\unaryminus 2)$ are coprime, we deduce that~$p_2=(t^{-1}\unaryminus 2)z$ for some~$z \in \Lambda$.
It follows that~$[p_2]=0 \in \Lambda/(t^{-1}\unaryminus 2)$ as required. 
\end{proof}

What follows is the first main technical proposition of this section.

\begin{proposition}
\label{prop:GoalTechnical}
If a knot~$K$ is the equatorial cross-section of a sphere $S=D_1 \cup_K D_2 \subset S^4$ with~$\pi_1(X_S) \cong \Z$ and~$\mathcal{A}_K \cong \Lambda/(t\unaryminus 2) \oplus \Lambda/(t^{-1}\unaryminus 2)$, then 
$$\lbrace P_{D_1},P_{D_2} \rbrace=\lbrace  \Lambda/(t\unaryminus 2)  \oplus 0, 0 \oplus  \Lambda/(t^{-1}\unaryminus 2) \rbrace$$
and, for $i=1,2$,  there is a group isomorphism
$$\pi_{D_i}/\pi_{D_i}^{(2)} \cong BS(1,2).$$
\end{proposition}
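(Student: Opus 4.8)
The plan is to deduce everything from Lemma~\ref{lem:DoublySlice}. Since $\pi_1(X_S)\cong\Z$, that lemma shows both discs are $\Lambda$-homology ribbon and that the inclusions realise $\mathcal{A}_K$ as an \emph{internal} direct sum $P_{D_1}\oplus P_{D_2}=\mathcal{A}_K$. As order polynomials are multiplicative along direct sums, this gives
$$\operatorname{Ord}(P_{D_1})\cdot\operatorname{Ord}(P_{D_2})\doteq\operatorname{Ord}(\mathcal{A}_K)\doteq(t\unaryminus 2)(t^{-1}\unaryminus 2).$$
Since $t\unaryminus 2$ and $t^{-1}\unaryminus 2$ are coprime irreducibles in the UFD $\Lambda$, up to units the factorisations of the right-hand side are the \emph{balanced} one $\{t\unaryminus 2,\,t^{-1}\unaryminus 2\}$ and the \emph{degenerate} one $\{1,\,(t\unaryminus 2)(t^{-1}\unaryminus 2)\}$. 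In the balanced case I would finish with Lemma~\ref{lem:Subset}: after relabelling, $\operatorname{Ord}(P_{D_1})\doteq t\unaryminus 2$ gives $P_{D_1}\subset\Lambda/(t\unaryminus 2)\oplus 0$ and $\operatorname{Ord}(P_{D_2})\doteq t^{-1}\unaryminus 2$ gives $P_{D_2}\subset 0\oplus\Lambda/(t^{-1}\unaryminus 2)$. Since $P_{D_1}\oplus P_{D_2}$ is already all of $\mathcal{A}_K=(\Lambda/(t\unaryminus 2)\oplus 0)\oplus(0\oplus\Lambda/(t^{-1}\unaryminus 2))$ and each $P_{D_i}$ lands in the corresponding summand, both inclusions must be equalities, which is exactly the claimed identity $\{P_{D_1},P_{D_2}\}=\{\Lambda/(t\unaryminus 2)\oplus 0,\,0\oplus\Lambda/(t^{-1}\unaryminus 2)\}$.

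The main obstacle is ruling out the degenerate factorisation, i.e. excluding $P_{D_i}=\mathcal{A}_K$ (equivalently $\mathcal{A}_{D_i}=0$) for some $i$; nothing proved so far forbids it, since the cross-isomorphisms $P_{D_1}\cong\mathcal{A}_{D_2}$, $P_{D_2}\cong\mathcal{A}_{D_1}$ of Lemma~\ref{lem:DoublySlice} are equally consistent with $\{P_{D_1},P_{D_2}\}=\{0,\mathcal{A}_K\}$. I would remove this case with Fox--Milnor duality: a $\Lambda$-homology ribbon disc is a slice disc, so the classical Blanchfield-pairing argument shows that $P_{D_i}$ is self-annihilating for the nonsingular, $t\mapsto t^{-1}$ Hermitian Blanchfield form on $\mathcal{A}_K$. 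Hence $\operatorname{Ord}(\mathcal{A}_{D_i})\cdot\overline{\operatorname{Ord}(\mathcal{A}_{D_i})}\doteq\operatorname{Ord}(\mathcal{A}_K)\doteq(t\unaryminus 2)(t^{-1}\unaryminus 2)$, where the bar denotes the involution $t\mapsto t^{-1}$. As $\overline{t\unaryminus 2}\doteq t^{-1}\unaryminus 2$, the only solutions are $\operatorname{Ord}(\mathcal{A}_{D_i})\doteq t\unaryminus 2$ or $t^{-1}\unaryminus 2$; in particular $\mathcal{A}_{D_i}\neq 0$ and $P_{D_i}\neq\mathcal{A}_K$, so only the balanced case occurs. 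The one point requiring care is that this input must hold for $\Lambda$-homology ribbon discs rather than smoothly ribbon ones, but the metabolizer property uses only Poincar\'e--Lefschetz duality in $N_{D_i}$ together with $\partial N_{D_i}=M_K$, so it applies verbatim.

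Finally, the group statement follows from part~(1) and the computation already recorded in the introduction. Once $P_{D_i}$ is one of the two summands, $\mathcal{A}_{D_i}=\mathcal{A}_K/P_{D_i}$ is isomorphic to $\Lambda/(t^{-1}\unaryminus 2)$ or $\Lambda/(t\unaryminus 2)$, i.e. to $\Z[1/2]$ with $t$ acting by multiplication by $2$ or by $1/2$. Exactly as in the proof of Proposition~\ref{prop:HomotopyRibbon}, the short exact sequence $1\to\mathcal{A}_{D_i}\to\pi_{D_i}/\pi_{D_i}^{(2)}\to\Z\to 1$ splits because $\Z$ is free, so $\pi_{D_i}/\pi_{D_i}^{(2)}\cong\Z\ltimes\mathcal{A}_{D_i}\cong\Z\ltimes\Z[1/2]$. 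The introduction identifies this semidirect product with $G=BS(1,2)$ (the two actions by $2$ and $1/2$ giving isomorphic groups via $a\mapsto a^{-1}$), completing the proof.
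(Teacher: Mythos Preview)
Your proof is correct and, in two places, takes a cleaner route than the paper's.

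The paper argues as follows. From $P_{D_1}\oplus P_{D_2}\cong\mathcal{A}_K$ it passes directly to $\{\operatorname{Ord}(P_{D_1}),\operatorname{Ord}(P_{D_2})\}\doteq\{t\unaryminus 2,t^{-1}\unaryminus 2\}$, then (after relabelling) applies Lemma~\ref{lem:Subset} to obtain $P_{D_1}\subset P_1$, and finally shows $P_1/P_{D_1}=0$ by arguing that $P_1/P_{D_1}$ is $\Z$-torsion (a rationalisation argument using that $\Lambda_\Q$ is a PID) while $\mathcal{A}_{D_1}\cong P_{D_2}\subset\mathcal{A}_K$ is $\Z$-torsion free. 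The group statement is then deduced from the cross-isomorphisms $P_{D_1}\cong\mathcal{A}_{D_2}$ and $P_{D_2}\cong\mathcal{A}_{D_1}$, exactly as you do.

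Your argument differs in two respects. First, you explicitly isolate and eliminate the degenerate factorisation $\{1,(t\unaryminus 2)(t^{-1}\unaryminus 2)\}$ via the Blanchfield form, whereas the paper does not comment on this case. For that step you in fact need only isotropy $P_{D_i}\subset P_{D_i}^\perp$, not the full metabolizer equality: if some $P_{D_i}=\mathcal{A}_K$ then nonsingularity of the Blanchfield form forces $\mathcal{A}_K\subset\mathcal{A}_K^\perp=0$, a contradiction. Second, once both inclusions $P_{D_1}\subset P_1$ and $P_{D_2}\subset P_2$ are in hand from Lemma~\ref{lem:Subset}, your observation that the internal direct sum $P_{D_1}\oplus P_{D_2}=\mathcal{A}_K=P_1\oplus P_2$ immediately forces both inclusions to be equalities is more direct than the paper's $\Z$-torsion argument. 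The trade-off is that you import the Blanchfield-form input from outside the paper, whereas the paper's route stays within the lemmas already established in Section~\ref{sub:Lemmas}; both are short and valid.
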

\begin{proof}
Lemma~\ref{lem:HomologyRibbon} implies that the inclusions induce an isomorphism~$P_{D_1} \oplus P_{D_2} \cong \mathcal{A}_K$.
It follows that $\lbrace \operatorname{Ord}(P_{D_1}),\operatorname{Ord}(P_{D_2})\rbrace \doteq  \lbrace t\unaryminus 2,t^{-1}\unaryminus 2\rbrace.$
We can assume without loss of generality that~$\operatorname{Ord}(P_{D_1})\doteq  t\unaryminus 2$ and  $\operatorname{Ord}(P_{D_2})\doteq  t^{-1}\unaryminus 2$.
We prove that $P_{D_1}=\Lambda/(t\unaryminus 2) \oplus 0$; the proof for the second summand is entirely similar.

The argument is a slight modification of~\cite[proof of Lemma 4.3 (2)]{ConwayPowellDiscs}.
Set $P_1:=\Lambda/(t\unaryminus 2) \oplus 0$ and $P_2:=0 \oplus \Lambda/(t^{-1}\unaryminus 2)$.
Lemma~\ref{lem:Subset} implies that $P_{D_1} \subset P_1$.
Using the short exact sequence from Lemma~\ref{lem:HomologyRibbon} we have
$$\mathcal{A}_{D_1} \cong \mathcal{A}_K/P_{D_1}=P_1/P_{D_1} \oplus P_2.$$
Note that~$\operatorname{Ord}(P_{D_1})\doteq  t\unaryminus 2\doteq  \operatorname{Ord}(P_1)$.
As $\Lambda_\Q:=\Lambda \otimes_\Z \Q$ is a PID, we deduce that~$P_1/P_{D_1} \otimes_{\Z} \Q=0$.
It follows that $P_1/P_{D_1}$ is $\Z$-torsion, so either $\mathcal{A}_{D_1}$ is $\Z$-torsion or $P_1/P_{D_1}=0.$

Using the short exact sequence from Lemma~\ref{lem:HomologyRibbon} and the fact that $\mathcal{A}_K$ is $\Z$-torsion free~\cite[Proposition 3.5]{LevineKnotModules},  we deduce that~$\mathcal{A}_{D_1}$ is $\Z$-torsion free.
It follows that~$P_1/P_{D_1}=0$ and therefore~$P_{D_1}=P_1$.
The same argument shows that  $P_{D_2}=P_2$.

Finally we show that $\pi_{D_i}/\pi_{D_i}^{(2)} \cong BS(1,2).$
We already saw in (the proof of) Lemma~\ref{lem:DoublySlice} that the inclusions induce isomorphisms~$i_{D_2} \colon P_{D_1} \xrightarrow{\cong} \mathcal{A}_{D_2}$ and~$i_{D_1} \colon P_{D_2} \xrightarrow{\cong} \mathcal{A}_{D_1}$.
It follows that~$\mathcal{A}_{D_i} \cong \Z[\tmfrac{1}{2}]$ and so~$\pi_{D_i}/\pi_{D_i}^{(2)} \cong \Z \ltimes \mathcal{A}_{D_i} \cong BS(1,2)$ as required.
This concludes the proof of the proposition.
\end{proof}

Finally, we derive our second technical proposition.

\begin{proposition}
\label{prop:BS12}
Let $K$ be a knot with Alexander module~$\mathcal{A}_K \cong \Lambda/(t\unaryminus 2) \oplus \Lambda/(t^{-1}\unaryminus 2)$.
If $K$ is the equatorial cross-section of a sphere $S=D_1 \cup_K D_2 \subset S^4$ with $\pi_1(N_{D_i})$ metabelian for $i=1,2$ and~$\pi_1(X_S) \cong \Z$, then the discs~$D_1,D_2$ are $G$-homotopy ribbon.
\end{proposition}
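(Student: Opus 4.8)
The plan is to verify separately the two conditions in the definition of $G$-homotopy ribbonness and to assemble them from the preparatory results of this subsection. Recall that $D_i$ is $G$-homotopy ribbon exactly when the inclusion-induced map $\pi_1(M_K) \to \pi_{D_i}$ is surjective and $\pi_{D_i} \cong G$, where I write $\pi_{D_i} := \pi_1(N_{D_i})$.

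For the surjectivity, I would first observe that the hypothesis $\pi_1(X_S) \cong \Z$ places us in the setting of Lemma~\ref{lem:DoublySlice}, so both $D_1$ and $D_2$ are $\Lambda$-homology ribbon. Since we are also assuming that each disc group is metabelian, Proposition~\ref{prop:HomotopyRibbon} applies verbatim and shows that each $D_i$ is homotopy ribbon, that is, $\pi_1(M_K) \twoheadrightarrow \pi_{D_i}$.

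For the identification of the disc group, I would invoke Proposition~\ref{prop:GoalTechnical}: its hypotheses, namely $\pi_1(X_S) \cong \Z$ and $\mathcal{A}_K \cong \Lambda/(t\unaryminus 2) \oplus \Lambda/(t^{-1}\unaryminus 2)$, are precisely those assumed here, so we obtain a group isomorphism $\pi_{D_i}/\pi_{D_i}^{(2)} \cong BS(1,2) = G$ for $i=1,2$. The remaining point is to upgrade this from the metabelian quotient to the full group, and here the metabelian hypothesis $\pi_{D_i}^{(2)}=1$ does exactly that: it identifies $\pi_{D_i}$ with its own metabelian quotient $\pi_{D_i}/\pi_{D_i}^{(2)}$, whence $\pi_{D_i} \cong G$. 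This is at least consistent, since $G = BS(1,2) \cong \Z \ltimes \Z[1/2]$ is itself metabelian.

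Combining the two parts, each $D_i$ is homotopy ribbon with disc group isomorphic to $G$, hence $G$-homotopy ribbon. I would stress that essentially all the work has already been carried out in Lemma~\ref{lem:DoublySlice} and Propositions~\ref{prop:HomotopyRibbon} and~\ref{prop:GoalTechnical}; the only genuine content here is the observation that the metabelian hypothesis plays a double role — it supplies surjectivity through Proposition~\ref{prop:HomotopyRibbon} and simultaneously collapses the computation $\pi_{D_i}/\pi_{D_i}^{(2)} \cong G$ (which, as noted in Remark~\ref{rem:Outline}, holds without any metabelian assumption) into an honest isomorphism $\pi_{D_i} \cong G$. Accordingly I do not anticipate any real obstacle beyond carefully confirming that the hypotheses of each cited result are in force.
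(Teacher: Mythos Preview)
Your proposal is correct and follows essentially the same approach as the paper's proof: both combine Proposition~\ref{prop:GoalTechnical} with the metabelian hypothesis to identify $\pi_{D_i}\cong G$, and use Lemma~\ref{lem:DoublySlice} together with Proposition~\ref{prop:HomotopyRibbon} (again via the metabelian hypothesis) to obtain homotopy ribbonness. The only difference is the order of presentation, which is immaterial.
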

\begin{proof}
Proposition~\ref{prop:GoalTechnical} implies that~$\pi_1(N_{D_i})/\pi_1(N_{D_i})^{(2)} \cong G$.
Since we assumed that the~$\pi_1(N_{D_i})$ are metabelian, it follows that $\pi_1(N_{D_i}) \cong  G$.
Again using that the~$\pi_1(N_{D_i})$ are metabelian, the combination of Lemma~\ref{lem:DoublySlice} and Proposition~\ref{prop:HomotopyRibbon} implies that the $D_i$ are homotopy ribbon.
\end{proof}

\subsection{Proof of the main theorem}
\label{sub:ProofMain}

We prove Theorem~\ref{thm:Converse} from the introduction.

\begin{theorem}
\label{thm:VanKampen}
Let~$K$ be a knot with Alexander module~$\mathcal{A}_K \cong \Lambda/(t\unaryminus 2)  \oplus \Lambda/(t^{-1}\unaryminus 2)$.
The following statements are equivalent:
\begin{enumerate}
\item The knot~$K \subset S^3$ arises as the equatorial cross-section of a~$2$-sphere~$S \subset S^4=D^4_1 \cup_{S^3} D^4_2$ with $\pi_1(S^4 \setminus S) \cong \Z$,  and the group of the disc~$D_i=D^4_i \cap S$ is metabelian for $i=1,2.$
\item The summands~$P_1=\Lambda/(t\unaryminus 2) \oplus 0$ and~$P_2=0 \oplus \Lambda/(t^{-1}\unaryminus 2)$ of~$\mathcal{A}_K$ both satisfy the Ext condition.
\end{enumerate}
\end{theorem}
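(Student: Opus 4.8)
The plan is to prove the two implications separately, in both cases leaning on the classification of $G$-homotopy ribbon discs recalled in Subsection~\ref{sub:Classification} together with Propositions~\ref{prop:GoalTechnical} and~\ref{prop:BS12}; the bulk of the work lies in the $(2)\Rightarrow(1)$ direction. For $(1)\Rightarrow(2)$ I would argue directly: assuming (1), the hypotheses (metabelian disc groups together with $\pi_1(X_S)\cong\Z$) are exactly those of Proposition~\ref{prop:BS12}, so the discs $D_1,D_2$ are $G$-homotopy ribbon, and Proposition~\ref{prop:GoalTechnical} identifies $\{P_{D_1},P_{D_2}\}=\{P_1,P_2\}$. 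Since $D_1$ and $D_2$ are $G$-homotopy ribbon discs realizing the submodules $P_1$ and $P_2$, the classification of~\cite{FriedlTeichner,ConwayPowellDiscs} (the bijection between $G$-homotopy ribbon discs and those $P_i$ satisfying the Ext condition) forces both $P_1$ and $P_2$ to satisfy the Ext condition, which is (2).

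For $(2)\Rightarrow(1)$ I would again start from the classification: since $P_1$ and $P_2$ satisfy the Ext condition, there exist $G$-homotopy ribbon discs $D_1,D_2\subset D^4$ with boundary $K$ and $P_{D_i}=P_i$, chosen so that the inclusion-induced map $\pi_1(M_K)\twoheadrightarrow\pi_1(N_{D_i})\cong G$ agrees with $\phi_{P_i}$ up to an automorphism of $G$. Gluing along $K$ produces the sphere $S=D_1\cup_K D_2\subset S^4$. The metabelian requirement in (1) is then automatic, as each disc group is $G=BS(1,2)$, which is metabelian, so the entire remaining task is to show $\pi_1(X_S)\cong\Z$.

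To compute $\pi_1(X_S)$ I would decompose $X_S=N_{D_1}\cup_{E_K}N_{D_2}$ and apply van Kampen's theorem. Since each $D_i$ is homotopy ribbon, the two inclusion-induced maps $\pi_1(E_K)\to\pi_1(N_{D_i})$ are surjective and factor through $\pi_1(M_K)$, so $\pi_1(X_S)\cong G*_{\pi_1(M_K)}G$ with amalgamating maps $\phi_{P_1}$ and $\phi_{P_2}$. The key algebraic observation is that an amalgamated product $A*_C B$ along two surjections $f\colon C\twoheadrightarrow A$ and $g\colon C\twoheadrightarrow B$ collapses to $A/\langle\langle f(\ker g)\rangle\rangle$; applied here this yields $\pi_1(X_S)\cong G/\langle\langle\phi_{P_1}(\ker\phi_{P_2})\rangle\rangle$. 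I would then compute the relevant subgroup: $\ker\phi_{P_2}$ lies in $\pi^{(1)}$ and maps onto $P_2\subseteq\mathcal{A}_K$, while the restriction of $\phi_{P_1}$ to $\pi^{(1)}$ is the projection $\mathcal{A}_K\twoheadrightarrow\mathcal{A}_K/P_1=\mathcal{A}_{D_1}=G^{(1)}$. Because $\mathcal{A}_K=P_1\oplus P_2$, the image of $P_2$ is all of $G^{(1)}$, so $\phi_{P_1}(\ker\phi_{P_2})=G^{(1)}$ and hence $\pi_1(X_S)\cong G/G^{(1)}\cong\Z$, as desired.

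The main obstacle is precisely this last van Kampen computation. The delicate points are verifying that the gluing locus is $E_K$ and that the amalgamation may be taken over $\pi_1(M_K)$ with maps genuinely equal to $\phi_{P_1},\phi_{P_2}$ (up to automorphisms of $G$, which affect neither kernels nor $G^{(1)}$), and establishing the collapse formula for amalgamated products over surjections. The identity $\phi_{P_1}(\ker\phi_{P_2})=G^{(1)}$ is where the hypothesis $P_1\neq P_2$, equivalently $\mathcal{A}_K=P_1\oplus P_2$, enters decisively: had the two discs determined the same submodule, the image would be trivial and the quotient would be $G$ rather than $\Z$.
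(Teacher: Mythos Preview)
Your proposal is correct and follows essentially the same approach as the paper: both directions invoke Propositions~\ref{prop:GoalTechnical} and~\ref{prop:BS12} together with the classification from~\cite{FriedlTeichner,ConwayPowellDiscs}, and the $(2)\Rightarrow(1)$ direction is a van Kampen computation for $X_S=N_{D_1}\cup_{E_K}N_{D_2}$. The only cosmetic difference is in how that pushout is evaluated: the paper builds a ladder of surjections and isomorphisms reducing the pushout of $\pi_1(N_{D_1})\leftarrow\pi_1(E_K)\rightarrow\pi_1(N_{D_2})$ to that of $\Z\ltimes P_2\leftarrow\Z\ltimes(P_1\oplus P_2)\rightarrow\Z\ltimes P_1$, whereas you use the collapse formula $A*_C B\cong A/\langle\langle f(\ker g)\rangle\rangle$ for surjective $g$ and compute $\phi_{P_1}(\ker\phi_{P_2})=G^{(1)}$ directly; these are two presentations of the same computation.
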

\begin{proof}
We prove that~$(2) \Rightarrow (1)$.
Assume that the summands $P_1$ and $P_2$ both satisfy the Ext condition.
Set $\pi_K:=\pi_1(M_K)$ and fix a splitting $\theta \colon \Z \to \pi_K/\pi_K^{(2)}$ of the short exact sequence in~\eqref{eq:SESIntro}.
As explained in Section~\ref{sub:Ext}, this splitting leads to an isomorphism $\vartheta_K \colon \pi_K/\pi_K^{(2)} \to \Z \ltimes \mathcal{A}_K$ and we can then consider the epimorphisms
\begin{align*}
&\phi_{P_1} \colon \pi_K \stackrel{p}{\twoheadrightarrow} \pi_K/\pi_K^{(2)} \xrightarrow{\vartheta_K,\cong} \Z \ltimes \mathcal{A}_K \xrightarrow{\operatorname{proj}_1} \Z \ltimes \mathcal{A}_K/P_1= \Z \ltimes P_2, \\
&\phi_{P_2} \colon \pi_K \stackrel{p}{\twoheadrightarrow}  \pi_K/\pi_K^{(2)} \xrightarrow{\vartheta_K,\cong} \Z \ltimes \mathcal{A}_K 
\xrightarrow{\operatorname{proj}_2}
 \Z \ltimes \mathcal{A}_K/P_2= \Z \ltimes P_1.
\end{align*} 
Technically speaking, $\phi_{P_i}$ depends on the choice of the splitting $\theta \colon \Z \to \pi_K/\pi_K^{(2)}$ 
but the Ext condition only depends on $P_i$ and $K$; recall Lemma~\ref{lem:ExtWellDef}.
As we recalled in Section~\ref{sub:Classification},  the Ext condition ensures the existence of~$G$-homotopy ribbon discs $D_1$ and $D_2$ with boundary $K$ and such that the inclusion induced map~$\pi_1(M_K) \to \pi_1(N_{D_i})$ agrees with~$\phi_{P_i}$ for $i=1,2$ up to an automorphism of $G=BS(1,2)$.
In order to cut down on notation we will omit these automorphisms from the notation and allow ourselves to identify $\pi_1(N_{D_1})$ with $\Z \ltimes P_2$ (resp.~$\pi_1(N_{D_2})$ with~$\Z \ltimes P_1$) and $\phi_{P_i}$ with~$\pi_1(M_K) \to \pi_1(N_{D_i})$ for $i=1,2$.
We also set $E_K:=S^3 \setminus \nu(K)$.

Since $G=BS(1,2)$ is metabelian,  $D_1$ and $D_2$ have metabelian disc groups and the proof reduces to showing that the $2$-sphere $S=D_1 \cup_K D_2 \subset S^4$ has infinite cyclic knot group.

With the notation introduced up to this point, we obtain the following commutative diagram in which all unlabelled maps are inclusion induced, except for the bottom maps that are defined by combining $\id_{\Z}$ with the apppropriate canonical projection:
$$
\xymatrix{
\pi_1(N_{D_1})\ar[d]^=&\ar[l]\pi_1(E_K)\ar[r]\ar@{->>}[d]&\pi_1(N_{D_2})\ar[d]_= \\
\pi_1(N_{D_1}\ar[d]^=)&\ar[l]\pi_K\ar[r]\ar@{->>}[d]^p&\pi_1(N_{D_2})\ar[d]_= \\
\pi_1(N_{D_1})/\pi_1(N_{D_1})^{(2)}\ar[d]^\cong_{\vartheta_{D_1}}&\ar[l]\pi_K/\pi_K^{(2)}\ar[d]_\cong^{\vartheta_{K}}\ar[r]&\pi_1(N_{D_2})/\pi_1(N_{D_2})^{(2)} \ar[d]_\cong^{\vartheta_{D_2}} \\
\Z \ltimes \mathcal{A}_{D_1} \ar[d]^\cong&\ar[l]_-{\proj_1} \Z \ltimes \mathcal{A}_K \ar[d]_\cong\ar[r]^-{\proj_2}&\Z \ltimes \mathcal{A}_{D_2} \ar[d]_\cong \\
\Z \ltimes P_2&\ar[l] \Z \ltimes (P_1 \oplus P_2)\ar[r]&\Z \ltimes P_1.
}
$$
Van Kampen's theorem implies that $\pi_1(X_S)$ is isomorphic to the pushout of the top row.
Since each of the outer vertical maps is an isomorphism and each of the central vertical maps is either a  surjection or an isomorphism,  this pushout is isomorphic to the push out of the bottom row.\footnote{
Given pushout squares
$$
\xymatrix@R0.5cm@C0.5cm{
A_1 \ar[r]^{f_2}\ar[d]_{f_3}&A_2\ar[d]^{g_2} \\ 
A_3 \ar[r]^{g_3}&A_4} 
\quad  \quad  \quad 
\xymatrix@R0.5cm@C0.5cm{
B_1 \ar[r]^{f_2'}\ar[d]_{f_3'} &B_2\ar[d]^{g_2'}\\ 
B_3 \ar[r]^{g_3'}&B_4,} 
$$
and a commutative diagram
$$
\xymatrix@R0.5cm@C0.5cm{
A_2 \ar[d]^\cong_{\varphi_2}&A_1\ar@{->>}[d] \ar[l]_{f_2}\ar[r]^{f_3} &A_3\ar[d]_\cong^{\varphi_3}\\ 
B_2 &B_1\ar[l]_{f_2'}\ar[r]^{f_3'} &B_3,}
$$
a verification involving the universal property of pushouts ensures that~$ A_2 \xrightarrow{g_2' \circ \varphi_2} B_4 \xleftarrow{g_3' \circ \varphi_3} A_3$
is a pushout of~$A_2  \xleftarrow{f_2}  A_1 \xrightarrow{f_3} A_3.$
 }
Since the push out of the bottom row is isomorphic to $\Z$, we conclude that $\pi_1(X_S) \cong \Z$, as required.
This concludes the proof that~$(2) \Rightarrow (1)$.

\medbreak

We now prove the converse.
We assume that $K$ arises as the equatorial cross-section of a $2$-sphere~$S=D_1 \cup_K D_2$ with~$D_1,D_2 \subset D^4$ two discs with metabelian disc groups and $\pi_1(X_S)\cong \Z$.
Our goal is to prove that the summands of~$\mathcal{A}_K \cong \Lambda/(t\unaryminus 2) \oplus \Lambda/(t^{-1}\unaryminus 2)$ satisfy the Ext condition.
Using the assumption on $\mathcal{A}_K$ and the fact that $\pi_1(X_S)\cong \Z$, Proposition~\ref{prop:GoalTechnical} implies that 
$$\lbrace P_{D_1},P_{D_2} \rbrace=\lbrace  \Lambda/(t\unaryminus 2)  \oplus 0, 0 \oplus  \Lambda/(t^{-1}\unaryminus 2) \rbrace=:\lbrace P_1,P_2 \rbrace.$$
Since we assumed that the disc groups are metabelian, Proposition~\ref{prop:BS12} implies that~$D_1$ and~$D_2$ are~$G$-homotopy ribbon.
Since~$D_1$ and~$D_2$ are~$G$-homotopy ribbon, we know from~\cite[Theorem~1.6]{ConwayPowellDiscs} that the submodules~$P_{D_1}$ and $P_{D_2}$ satisfy the Ext condition.
It follows that~$P_1$ and~$P_2$ also satisfy the Ext condition.
This concludes the proof of the theorem.
\end{proof}


\section{Satellite knots}
\label{sec:Application}

Continuing with the  notation from Section~\ref{sub:ApplicationIntro},  we write $R_\eta(K)$ for the satellite knot with pattern~$R \subset S^3$,  infection curve~$\eta \subset S^3 \setminus \nu R=:E_R$ and companion $K$.
In Section~\ref{sub:ApplyMain} we prove Theorem~\ref{thm:NewDoublySliceIntro} whereas in Section~\ref{sub:NonNecessity}, we prove that the double Ext condition from Theorem~\ref{thm:MainIntro} is not necessary for a knot $J$ with $\mathcal{A}_J \cong \Lambda/(t\unaryminus 2) \oplus \Lambda/(t^{-1}\unaryminus 2)$ to be doubly slice.

\subsection{An application of Theorem~\ref{thm:MainIntro}}
\label{sub:ApplyMain}

We prove Theorem~\ref{thm:NewDoublySliceIntro} from the introduction which gives conditions on a pattern $R$ with $\mathcal{A}_R \cong \Lambda/(t\unaryminus 2) \oplus  \Lambda/(t^{-1}\unaryminus 2)$ so that $R_\eta(K)$ is doubly slice for every companion~$K$.

\begin{construction}
\label{cons:MapFromSatellite}
We construct a degree one map~$f \colon M_{R_\eta(K)} \to M_R$.
The $0$-framed surgery of $R_\eta(K)$ decomposes as~$M_{R_\eta(K)}=M_R \setminus \nu (\eta) \cup E_K$ where the gluing identifies the~$0$-framed longitude of~$K$ with a meridian of~$\eta$ and a meridian of~$K$ with (a $0$-framed longitude of)~$\eta$.
These identifications determine a homeomorphism~$\partial E_K \to \partial \overline{\nu}(\eta)$.
Thinking of~$ \overline{\nu}(\eta)$ as an unknot exterior, this homeomorphism extends to a degree one map~$f_0 \colon E_K \to \overline{\nu}(\eta)$; see e.g.~\cite[Construction~7.1]{MillerPowell} for details.
This map~$f_0$ can be combined with the identity map on~$M_R \setminus \nu(\eta)$  to obtain the required degree one map
$$f=\id \cup f_0 \colon M_{R_\eta(K)} \to M_R.$$
Technically speaking,~\cite[Construction 7.1]{MillerPowell} shows that~$f_0$ combines with the identity map on~$E_R \setminus \nu(\eta)$ to yield a degree one map $E_{R_\eta(K)} \to E_R$.
This is equivalent to noting that~$f$ has degree one because,  for any knot $J$,  excision and the fact that $E_J$ is a homology circle imply that~$H_3(E_J,\partial E_J) \cong  H_3(M_J,E_J) \cong  H_3(M_J).$
\end{construction}

\begin{remark}
\label{rem:DegreeOne}
Since $f$ has degree one,  it induces a surjection on fundamental groups.
In particular if $\psi \colon \pi_1(M_R) \twoheadrightarrow \Gamma$ is an epimorphism, then so is $\psi \circ f_*$.
\end{remark}

The following result is essentially contained in~\cite[Lemma 6.2]{FriedlTeichner}.
Under the appropriate assumptions, the statement of~\cite[Lemma 6.2]{FriedlTeichner} does not describe an explicit isomorphism~$H_1(M_{R_\eta(K)};\Z[\Gamma]_{\psi \circ f_*}) \xrightarrow{\cong} H_1(M_R;\Z[\Gamma]_\psi)$ but a close look at the proof shows that it is induced by~$f$; we recall the relevant details.

\begin{lemma}
\label{lem:IsoHomology}
The map~$f \colon M_{R_\eta(K)} \to M_R$ induces an isomorphism~$H_1(M_{R_\eta(K)}) \to H_1(M_R)$ on homology and,  given an epimorphism~$\psi \colon \pi_1(M_R) \twoheadrightarrow \Gamma$, if either~$\psi(\eta)=1$ or $\Delta_K \doteq 1$, then~$f$ induces a~$\Z[\Gamma]$-isomorphism
$$  
f_* \colon H_1(M_{R_\eta(K)};\Z[\Gamma]_{\psi \circ f_*}) \xrightarrow{\cong} H_1(M_R;\Z[\Gamma]_\psi).
$$
\end{lemma}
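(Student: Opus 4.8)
The plan is to exploit the decompositions $M_R = (M_R \setminus \nu(\eta)) \cup_T \overline{\nu}(\eta)$ and $M_{R_\eta(K)} = (M_R \setminus \nu(\eta)) \cup_T E_K$ along the torus $T$, under which $f = \id \cup f_0$ is the identity on the common piece $A := M_R \setminus \nu(\eta)$ and on $T$, and equals $f_0 \colon E_K \to \overline{\nu}(\eta)$ on the remaining piece. The first assertion is immediate: $H_1(M_J) \cong \Z$ for every knot $J$, and since $f$ has degree one it is surjective on homology (Remark~\ref{rem:DegreeOne} records the $\pi_1$-version; the homological statement follows from the usual cap-product splitting), so $f_* \colon \Z \to \Z$ is an isomorphism. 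For the twisted statement I would compare the Mayer--Vietoris sequences of the two decompositions, taken with the local systems $\Z[\Gamma]_\psi$ and $\Z[\Gamma]_{\psi \circ f_*}$ respectively. The map $f$ induces a morphism between these long exact sequences which is the identity on every $H_*(A;\Z[\Gamma])$ and $H_*(T;\Z[\Gamma])$ term, so the five lemma reduces everything to proving that $f_0$ induces a $\Z[\Gamma]$-isomorphism $H_*(E_K;\Z[\Gamma]_{\psi \circ f_*}) \to H_*(\overline{\nu}(\eta);\Z[\Gamma]_\psi)$ in every degree.

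The key observation is that the local system on $E_K$ is abelian. Under the satellite gluing a meridian $\mu_K$ of $K$ is identified with a longitude of $\eta$, i.e. with the core of the solid torus $\overline{\nu}(\eta)$, so $f_{0*} \colon \pi_1(E_K) \to \pi_1(\overline{\nu}(\eta)) \cong \Z\langle\eta\rangle$ factors through the abelianisation and sends $\mu_K \mapsto \eta$. Consequently the restriction of $\psi \circ f_*$ to $\pi_1(E_K)$ factors as $\pi_1(E_K) \twoheadrightarrow \Z \xrightarrow{\,t \mapsto \psi(\eta)\,} \Gamma$, and $H_*(E_K;\Z[\Gamma]_{\psi \circ f_*}) = H_*(\Z[\Gamma] \otimes_\Lambda C_*(E_K;\Lambda))$, where $\Z[\Gamma]$ is the right $\Lambda$-module given by $t \mapsto \psi(\eta)$. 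The same description applies to $\overline{\nu}(\eta) \simeq S^1$, and, identifying the group rings via $\mu_K \mapsto \eta$, the map $f_0$ induces the natural comparison map. I would then split into the two hypotheses.

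If $\psi(\eta) = 1$ the local system on $E_K$ is trivial, so $H_*(E_K;\Z[\Gamma]) \cong H_*(E_K;\Z) \otimes_\Z \Z[\Gamma]$ and likewise for $\overline{\nu}(\eta)$; as $f_0$ sends the generator $\mu_K$ of $H_1(E_K;\Z) \cong \Z$ to the generator $\eta$ of $H_1(\overline{\nu}(\eta);\Z) \cong \Z$, it is an isomorphism on untwisted homology in every degree (both are homology circles), hence on $\Z[\Gamma]$-homology. If instead $\Delta_K \doteq 1$, then the Alexander module $H_1(E_K;\Lambda)$, which has order $\Delta_K$ and a square presentation matrix, vanishes (a square matrix with unit determinant is invertible over $\Lambda$); since also $H_0(E_K;\Lambda) \cong \Lambda/(t-1) \cong H_0(\overline{\nu}(\eta);\Lambda)$ and $H_{\ge 2}(E_K;\Lambda) = 0$, the chain map $C_*(E_K;\Lambda) \to C_*(\overline{\nu}(\eta);\Lambda)$ induced by $f_0$ is a $\Lambda$-homology isomorphism. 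Both complexes consist of free $\Lambda$-modules, so this quasi-isomorphism is a $\Lambda$-chain homotopy equivalence, which is preserved by the functor $\Z[\Gamma] \otimes_\Lambda (-)$; applying it yields the isomorphism on $\Z[\Gamma]$-homology in every degree.

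The main obstacle is the bookkeeping required to make the Mayer--Vietoris comparison rigorous: one must verify that the three local systems on $A$, $T$ and $E_K$ are genuinely the restrictions of $\Z[\Gamma]_{\psi \circ f_*}$ and are intertwined by $f$ with their counterparts for $M_R$, and in particular confirm the identification $f_{0*}(\mu_K) = \eta$ forced by the satellite gluing conventions. The remaining homological input --- the vanishing of $H_{\ge 2}(E_K;\Lambda)$ and of $H_1(E_K;\Lambda)$ when $\Delta_K \doteq 1$, together with the fact that a quasi-isomorphism of bounded complexes of projectives is a chain homotopy equivalence and hence survives base change --- is standard, so once the coefficient systems are correctly matched the five-lemma argument closes the proof.
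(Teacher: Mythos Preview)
Your proof is correct and follows the same architecture as the paper's: compare the two Mayer--Vietoris sequences for the decompositions along $T=\partial\overline{\nu}(\eta)$, observe that $f$ is the identity on $A$ and a homeomorphism on $T$, and reduce via the five lemma to showing that $f_0\colon E_K\to\overline{\nu}(\eta)$ induces an isomorphism on low-degree $\Z[\Gamma]$-homology; the case $\psi(\eta)=1$ is handled identically. The only difference is in the $\Delta_K\doteq 1$ case: the paper computes $H_0$ and $H_1$ with $\Z[\Gamma]$-coefficients via the universal coefficient spectral sequence over $\Lambda$, whereas you argue that $f_0$ is a $\Lambda$-quasi-isomorphism between bounded free complexes and hence a $\Lambda$-chain homotopy equivalence that survives $\Z[\Gamma]\otimes_\Lambda(-)$---a slightly cleaner route that yields the isomorphism in all degrees at once, though only degrees $0$ and $1$ are needed.
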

\begin{proof}
By construction, the map~$f$ takes a meridian of~$R_\eta(K)$ to a meridian of~$R$  and therefore induces an isomorphism on first homology. 

The second assertion will be proved by comparing the Mayer-Vietoris sequences for the decompositions~$M_{R_\eta(K)}=M_R \setminus \nu(\eta) \cup E_K$ and~$M_{R}=M_R \setminus \nu(\eta) \cup \overline{\nu}(\eta)$ and applying the $5$-lemma.

We proceed with the details.
Consider the following commutative diagram in which the rows are exact, with the top (resp. bottom) row having coefficients in $\Z[\Gamma]_{\psi \circ f_*}$  (resp. $\Z[\Gamma]_{\psi}$):
$$
\xymatrix@R0.5cm@C0.3cm{
H_1(\partial E_K) \ar[r]\ar[d]^{f_*,\cong}&
H_1(M_R \setminus \nu(\eta)) \oplus H_1(E_K) \ar[r]\ar[d]^{f_*}&
H_1(M_{R_\eta(K)})\ar[r] \ar[d]^{f_*}& 
H_0(\partial E_K) \ar[r]\ar[d]^{f_*,\cong}&
H_0(M_R \setminus \nu(\eta)) \oplus H_1(E_K)\ar[d]^{f_*}& \\
H_1(\partial \overline{\nu}(\eta))  \ar[r]&
H_1(M_R \setminus \nu(\eta)) \oplus H_1(\overline{\nu}(\eta))  \ar[r]&
H_1(M_{R}) \ar[r]&
H_0(\partial \overline{\nu}(\eta))  \ar[r]&
H_0(M_R \setminus \nu(\eta)) \oplus H_1(\overline{\nu}(\eta)).&
}
$$
Here we used that $f \colon \partial E_K \to \partial \overline{\nu}(\eta)$ is an homeomorphism and that,  by definition of $f=\id \cup f_0$, we know that~$f_* \colon H_*(M_{R_\eta(K)},\Z[\Gamma]_{\psi \circ f_*}) \to H_*(M_R;\Z[\Gamma]_{\psi})$ is an isomorphism.
In order to apply the~$5$-lemma we therefore prove that if either~$\psi(\eta)=1$ or $\Delta_K \doteq 1$, then~$f \colon E_K \to \overline{\nu}(\eta)$ induces an isomorphism on $H_i(-;\Z[\Gamma])$ for $i=0,1$.

Assume that~$\psi(\eta)=1$. Since (a $0$-framed longitude of)~$\eta$ is identified with a meridian~$\mu_K$ of~$K$ in~$M_{R_\eta(K)}$,  we have~$\psi \circ f_*(\mu_K)=1$.
It follows that $X \in \lbrace \overline{\nu}(\eta), E_K \rbrace$ is endowed with the trivial coefficient system and therefore $H_*(X;\Z[\Gamma]) \cong H_*(X) \otimes_\Z \Z[\Gamma]).$
Since $f \colon E_K \to \overline{\nu}(\eta)$ induces an isomorphism on $\Z$-homology, it then also induces an isomorphism on $\Z[\Gamma]$-homology.

We now drop the assumption on~$\psi(\eta)$.
The composition $\pi_1(E_K) \to \pi_1(M_{R_\eta(K)}) \xrightarrow{f} \pi_1(M_R)$ factors through $\pi_1(\overline{\nu}(\eta)) \cong \Z$.
It follows that for $X \in \lbrace \overline{\nu}(\eta), E_K \rbrace$, the coefficient system factors through abelianisation and therefore $H_*(X;\Z[\Gamma]) \cong H_*(X;\Lambda \otimes_\Lambda \Z[\Gamma]).$
Since $\Delta_K \doteq 1$ we have~$H_1(E_K;\Lambda)=0$.
It follows that for $X \in \lbrace \overline{\nu}(\eta), E_K \rbrace$ we have $H_i(X;\Lambda)=0$ for $i=1,2$ and the universal coefficient spectral sequence shows that $H_0(X;\Z[\Gamma]) \cong H_0(X;\Lambda) \otimes_\Lambda \Z[\Gamma]$ and~$H_1(X;\Z[\Gamma]) \cong \operatorname{Tor}_1^\Lambda(H_0(X;\Lambda),\Z[\Gamma])$.
As $f \colon E_K \to \overline{\nu}(\eta)$ induces an isomorphism on~$H_0(-;\Lambda)$, it therefore also induces an isomorphism on $H_i(-;\Z[\Gamma])$ for $i=0,1$.

Thus if either~$\psi(\eta)=1$ or $\Delta_K \doteq 1$, then we can apply the $5$-lemma to the aforementioned commutative diagram of Mayer-Vietoris exact sequences and deduce that $f \colon M_{R_\eta(K)} \to M_R$ induces an isomorphism on $H_1(-;\Z[\Gamma])$.
This concludes the proof of the lemma.
\end{proof}

Continuing with the notation introduced above,  but additionally setting $\pi_R:=\pi_1(M_R)$ and $\pi_{R_\eta(K)}:=\pi_1(M_{R_\eta(K)})$,  the next lemma serves to relate the coefficient systems on~$M_R$ and~$M_{R_\eta(K)}$.

\begin{lemma}
\label{lem:GroupAutomorphism}
Assume that~$\mathcal{A}_R\cong\Lambda/(t\unaryminus 2) \oplus \Lambda/(t^{-1}\unaryminus 2)$.
Let~$P \subset \mathcal{A}_R$ be one of the two summands and let~$\eta \subset S^3 \setminus R$ be a winding number zero infection curve.
The map~$f \colon M_{R_\eta(K)} \to M_R$ from Construction~\ref{cons:MapFromSatellite} gives rise to a group automorphism~$f_*^G \colon G \to G$ that makes the following diagram commute:
$$
\xymatrix{
\pi_{R_\eta(K)} \ar@{->>}[r]^-{\phi_{f^{-1}_*(P)}}\ar[d]_{f_*} &G \ar[d]^{f_*^G} \\
\pi_{R} \ar@{->>}[r]^-{\phi_P} &G.
}
$$

\end{lemma}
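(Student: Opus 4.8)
The plan is to construct the automorphism $f_*^G$ by analyzing how $f$ interacts with the metabelian quotients, exploiting the fact that $\eta$ has winding number zero. The key observation is that the degree one map $f$ induces an isomorphism on $H_1$ (by Lemma~\ref{lem:IsoHomology}) and, crucially, should induce an isomorphism on the relevant noncommutative Alexander modules. First I would note that since $\eta$ has winding number zero, its image in $\mathcal{A}_R = H_1(M_R;\Lambda)$ is a well-defined element, and I would need to track whether $\psi(\eta)$ vanishes under the map $\phi_P$; since $G = BS(1,2)$ is metabelian and $\eta$ is a commutator-type curve, one expects $\phi_P(\eta) = 1$, which would place us in the first case of Lemma~\ref{lem:IsoHomology}.

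Next I would construct the vertical maps of the square at the level of metabelian quotients. The map $f_* \colon \pi_{R_\eta(K)} \to \pi_R$ descends to a map on $\pi/\pi^{(2)}$ quotients, and by the isomorphism $\pi/\pi^{(2)} \cong \Z \ltimes \mathcal{A}$ together with Lemma~\ref{lem:IsoHomology} (which gives $f_* \colon H_1(M_{R_\eta(K)};\Lambda) \xrightarrow{\cong} \mathcal{A}_R$ in the case $\Delta_K \doteq 1$, or more generally after passing to the appropriate coefficient system), I would argue that $f$ induces an isomorphism $\Z \ltimes \mathcal{A}_{R_\eta(K)} \xrightarrow{\cong} \Z \ltimes \mathcal{A}_R$ compatible with the semidirect product structures. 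The submodule $f_*^{-1}(P) \subset \mathcal{A}_{R_\eta(K)}$ is then the preimage of the chosen summand $P$, and the induced map on quotients $\Z \ltimes \mathcal{A}_{R_\eta(K)}/f_*^{-1}(P) \to \Z \ltimes \mathcal{A}_R/P$ is an isomorphism. Since both targets are identified with $G = BS(1,2)$, this isomorphism is precisely the sought-after $f_*^G$, and the square commutes by construction.

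\emph{The hard part will be} verifying that $f_*$ genuinely respects the semidirect product decomposition, i.e. that it commutes with the $\Z$-action arising from conjugation by the meridians. This requires checking that $f$ takes a meridian of $R_\eta(K)$ to a meridian of $R$ (which is already noted in the proof of Lemma~\ref{lem:IsoHomology}) and that the induced map on Alexander modules is $\Lambda$-linear with respect to the resulting actions --- this $\Lambda$-linearity is exactly the content of $f_*$ being a $\Z[\Gamma]$-isomorphism when $\Gamma = \Z$, i.e. the untwisted statement of Lemma~\ref{lem:IsoHomology}. Once $\Lambda$-linearity and meridian-compatibility are established, the splitting of \eqref{eq:ExactSequencepi2} transports along $f$, and the isomorphism $f_*^G$ descends automatically.

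\emph{I would conclude} by assembling the diagram: the left square (the $f_*$ column) commutes because $f_*$ is a group homomorphism descending through the derived-series quotients, and the identification of $\Z \ltimes \mathcal{A}/(\text{summand})$ with $G$ on both rows is exactly the definition of $\phi_P$ and $\phi_{f_*^{-1}(P)}$ from Section~\ref{sub:Ext}. The map $f_*^G$ is an automorphism of $G$ rather than merely an isomorphism between two copies because both rows land in the same group $G = BS(1,2)$; its inverse is induced by the inverse isomorphism on Alexander modules. This completes the construction and establishes commutativity.
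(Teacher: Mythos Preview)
Your overall strategy matches the paper's: apply Lemma~\ref{lem:IsoHomology} to see that $f_*$ is an isomorphism on Alexander modules, combine this with the isomorphism on $H_1$ to deduce that $f_*$ is an isomorphism on the metabelian quotients $\pi/\pi^{(2)}$, choose compatible meridians to obtain compatible splittings $\vartheta$, and then pass to the quotient by $P$ (resp.\ $f_*^{-1}(P)$) to define $f_*^G$. However, your first paragraph contains a genuine error. You assert that ``since $G = BS(1,2)$ is metabelian and $\eta$ is a commutator-type curve, one expects $\phi_P(\eta) = 1$''. This is false: winding number zero only places $\eta$ in $\pi_R^{(1)}$, whereas $G$ being metabelian means $\phi_P$ kills $\pi_R^{(2)}$, not $\pi_R^{(1)}$. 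In general $\phi_P(\eta)$ lands as a nontrivial element of $G^{(1)} \cong \Z[\tfrac{1}{2}]$. Indeed, the extra hypothesis ``$\phi_P(\eta)=1$ or $\Delta_K\doteq 1$'' is precisely what is \emph{added} in the next result, Proposition~\ref{prop:ExtConditionImage}, and is not available here.

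Fortunately the claim is also unnecessary for this lemma. The correct invocation of Lemma~\ref{lem:IsoHomology} is with $\psi = \operatorname{ab}$ and $\Gamma = \Z$: since $\eta$ has winding number zero, $\operatorname{ab}(\eta)=1$, so the first hypothesis of that lemma holds and $f_*\colon \mathcal{A}_{R_\eta(K)} \to \mathcal{A}_R$ is a $\Lambda$-isomorphism for \emph{any} companion $K$. This is the ``untwisted statement'' you allude to in your third paragraph, and it is exactly what the paper uses. Once you invoke it cleanly (and drop the aside about needing $\Delta_K\doteq 1$ in your second paragraph, which stems from the same confusion), the rest of your argument goes through as in the paper.
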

\begin{proof}
Since~$\eta$ has winding number zero, it belongs to the kernel of the abelianisation map~$\operatorname{ab} \colon \pi_R \twoheadrightarrow \Z$.
Lemma~\ref{lem:IsoHomology} applied with~$\psi=\operatorname{ab}$ implies that~$f$ induces an isomorphism on first homology and on the Alexander modules.
It follows that~$f$ also induces an isomorphism on the second derived quotients:
\begin{equation}
\label{eq:SecondDerivedHomomf}
\xymatrix@R0.5cm@C0.5cm{
1\ar[r]& 
\mathcal{A}_{R_\eta(K)} \ar[r]\ar[d]^{f_*}_\cong&
 \pi_R/\pi_{R_\eta(K)}^{(2)} \ar[r] \ar[d]^{f_*}_\cong
 & H_1(M_{R_\eta(K)}) \ar[d]^{f_*}_\cong \ar[r]
 &1 \\
1\ar[r]& \mathcal{A}_R \ar[r]& \pi_R/\pi_R^{(2)} \ar[r]
&  H_1(M_R)  \ar[r]&1.
}
\end{equation}
Choose meridians $\mu_R \in \pi_R$ and $\mu_{R_\eta(K)}\in \pi_{R_\eta(K)}$ such that $f_*(\mu_R)=\mu_{R_\eta(K)}$.
These choices lead to splittings  of the rows in this diagram and to isomorphisms $\vartheta_R \colon \pi_R/\pi_R^{(2)} \to \Z \ltimes \mathcal{A}_R$ and $\vartheta_{R_\eta(K)} \colon \pi_{R_\eta(K)}/\pi_{R_\eta(K)}^{(2)} \to \Z \ltimes \mathcal{A}_{R_\eta(K)}$ that fit into the following commutative diagram:
$$
\xymatrix@R0.5cm{
\pi_{R_\eta(K)} \ar[r]\ar[d]^{f_*}&
\pi_{R_\eta(K)}/\pi_{R_\eta(K)}^{(2)}\ar[r]^{\vartheta_{R_\eta(K)},\cong} \ar[d]^{f_*}_\cong&
\Z \ltimes \mathcal{A}_{R_\eta(K)}\ar[r]\ar[d]^{f_*}_\cong& 
 \Z \ltimes \mathcal{A}_{R_\eta(P)}/f^{-1}_*(P)\ar[d]^{f_*}_\cong \\
\pi_R\ar[r]&
\pi_R/\pi_R^{(2)}\ar[r]^{\vartheta_R,\cong}&
\Z \ltimes \mathcal{A}_R\ar[r]&
 \Z \ltimes \mathcal{A}_R/P.
}
$$
Since we assumed that~$\mathcal{A}_R\cong\Lambda/(t\unaryminus 2) \oplus \Lambda/(t^{-1}\unaryminus 2)$,  the two rightmost terms are isomorphic to~$G=BS(1,2)$ (as abelian groups) and this is how we define the automorphism~$f_*^G$.
This concludes the proof of the lemma.
\end{proof}

Note that~$f_*^G$ depends on the choice of splittings of the rows in~\eqref{eq:SecondDerivedHomomf} and on the choice of the identification of the righmost terms in the previous diagram with $G$.
These choices do not affect the remainder of the argument.

The following proposition is similar to~\cite[Proposition 7.4]{FriedlTeichner}.

\begin{proposition}
\label{prop:ExtConditionImage}
Let~$R$ be a pattern with~$\mathcal{A}_R\cong\Lambda/(t\unaryminus 2) \oplus \Lambda/(t^{-1}\unaryminus 2)$, let~$P \subset \mathcal{A}_R$ be one of the two summands, and let~$\eta \subset S^3 \setminus R$ be a winding number zero infection curve.
Assume that either~$\phi_P(\eta)=1$ or~$\Delta_K \doteq 1$.
If~$P \leq \mathcal{A}_R$ satisfies the Ext condition,  then~$f_*^{-1}(P) \leq \mathcal{A}_{R_\eta(K)}$ satisfies the Ext condition.
\end{proposition}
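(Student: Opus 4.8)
The plan is to reduce everything to a single statement: that the two twisted first-homology modules featuring in the two Ext conditions are isomorphic as $\Z[G]$-modules. Once this is established, the proposition follows at once from the contravariance of $\operatorname{Ext}^1_{\Z[G]}(-,\Z[G])$. Before starting, I would record that both Ext conditions really live over the same ring $\Z[G]$. Indeed, since $\eta$ has winding number zero, Lemma~\ref{lem:IsoHomology} applied with $\psi=\operatorname{ab}$ (equivalently the discussion opening the proof of Lemma~\ref{lem:GroupAutomorphism}) shows that $f_*\colon \mathcal{A}_{R_\eta(K)} \to \mathcal{A}_R$ is an isomorphism, and by definition it carries $f_*^{-1}(P)$ onto $P$. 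Hence $\mathcal{A}_{R_\eta(K)}/f_*^{-1}(P) \cong \mathcal{A}_R/P \cong \Z[\tmfrac{1}{2}]$, so that $\Z \ltimes \mathcal{A}_{R_\eta(K)}/f_*^{-1}(P) \cong G$ exactly as on the pattern side; this is precisely the target of the epimorphism $\phi_{f_*^{-1}(P)}$ produced by Lemma~\ref{lem:GroupAutomorphism}.

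The heart of the argument is then a chain of three $\Z[G]$-module isomorphisms. First I would apply Lemma~\ref{lem:IsoHomology} with the coefficient system $\psi=\phi_P$; this is legitimate precisely because our hypothesis guarantees that either $\phi_P(\eta)=1$ or $\Delta_K \doteq 1$. It yields a $\Z[G]$-isomorphism
$$H_1(M_{R_\eta(K)};\Z[G]_{\phi_P \circ f_*}) \xrightarrow{\cong} H_1(M_R;\Z[G]_{\phi_P}).$$
Next I would invoke the commuting square of Lemma~\ref{lem:GroupAutomorphism}, which reads $\phi_P \circ f_* = f_*^G \circ \phi_{f_*^{-1}(P)}$ for the automorphism $f_*^G$ of $G$ constructed there. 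Feeding this automorphism into Lemma~\ref{lem:TwistedHomology} gives
$$H_1(M_{R_\eta(K)};\Z[G]_{\phi_{f_*^{-1}(P)}}) \xrightarrow{\cong} H_1(M_{R_\eta(K)};\Z[G]_{f_*^G \circ \phi_{f_*^{-1}(P)}}) = H_1(M_{R_\eta(K)};\Z[G]_{\phi_P \circ f_*}).$$
Composing the two displayed isomorphisms produces $H_1(M_{R_\eta(K)};\Z[G]_{\phi_{f_*^{-1}(P)}}) \cong H_1(M_R;\Z[G]_{\phi_P})$ as $\Z[G]$-modules.

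Finally, since $\operatorname{Ext}^1_{\Z[G]}(-,\Z[G])$ is contravariant and sends isomorphisms to isomorphisms, the identification above gives an isomorphism between $\operatorname{Ext}^1_{\Z[G]}(H_1(M_{R_\eta(K)};\Z[G]_{\phi_{f_*^{-1}(P)}}),\Z[G])$ and $\operatorname{Ext}^1_{\Z[G]}(H_1(M_R;\Z[G]_{\phi_P}),\Z[G])$. The latter vanishes because $P$ satisfies the Ext condition by assumption, so the former vanishes as well, which is exactly the claim that $f_*^{-1}(P)$ satisfies the Ext condition. I expect no genuine homological obstacle here: all the substantive work (the Mayer–Vietoris comparison and the $5$-lemma) is already carried out in Lemma~\ref{lem:IsoHomology}, and the only real care needed is bookkeeping — threading the three coefficient systems $\phi_P$, $\phi_P\circ f_*$, $\phi_{f_*^{-1}(P)}$ and the single automorphism $f_*^G$ through Lemmas~\ref{lem:IsoHomology},~\ref{lem:GroupAutomorphism} and~\ref{lem:TwistedHomology} in the correct order, and checking that the hypothesis $\phi_P(\eta)=1$ or $\Delta_K\doteq 1$ is exactly what licenses the use of Lemma~\ref{lem:IsoHomology}.
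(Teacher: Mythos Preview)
Your proof is correct and follows essentially the same approach as the paper: both assemble the chain of $\Z[G]$-isomorphisms $H_1(M_{R_\eta(K)};\Z[G]_{\phi_{f_*^{-1}(P)}}) \cong H_1(M_{R_\eta(K)};\Z[G]_{\phi_P \circ f_*}) \cong H_1(M_R;\Z[G]_{\phi_P})$ via Lemmas~\ref{lem:TwistedHomology},~\ref{lem:GroupAutomorphism} and~\ref{lem:IsoHomology}, and then conclude by contravariance of Ext. The only differences are cosmetic---you invoke the lemmas in a slightly different order and add an explicit preliminary check that the target group on the satellite side is indeed $G$.
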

\begin{proof}
Lemma~\ref{lem:TwistedHomology} implies that the automorphism~$f^G_* \colon G \to G$ from Lemma~\ref{lem:GroupAutomorphism} induces an isomorphism $H_1(M_{R_\eta(K)};\Z[G]_{ \phi_{f_*^{-1}(P)}}) \cong H_1(M_{R_\eta(K)};\Z[G]_{f_*^G \circ \phi_{f_*^{-1}(P)}})$.
Here note that Lemma~\ref{lem:GroupAutomorphism} applies because we assumed that $\eta$ has winding number zero.
Since~$\phi_P \circ f_*= f_*^G \circ \phi_{f_*^{-1}(P)}$, we obtain the following sequence of isomorphisms:
\begin{align*}
H_1(M_{R_\eta(K)};\Z[G]_{ \phi_{f_*^{-1}(P)}})
& \xrightarrow{\cong } 
H_1(M_{R_\eta(K)};\Z[G]_{f_*^G \circ \phi_{f_*^{-1}(P)}}) \\
& \xrightarrow{=} 
 H_1(M_{R_\eta(K)};\Z[G]_{\phi_P \circ f_*}) \\
&\xrightarrow{\cong,f_*}
 H_1(M_R;\Z[G]_{\phi_P}).
\end{align*}
The last isomorphism comes from Lemma~\ref{lem:IsoHomology}; this lemma applies because we assumed that either~$\phi_P(\eta)=1$ or $\Delta_K \doteq 1$.

The Ext functor is contravariant in the first variable,  so we obtain an isomorphism
$$\operatorname{Ext}^1_{\Z[G]}(H_1(M_R;\Z[G]_{\phi_P}),\Z[G])
\xrightarrow{\cong} 
\operatorname{Ext}^1_{\Z[G]}(H_1(M_{R_\eta(K)};\Z[G]_{\phi_{f_*^{-1}(P)}}),\Z[G]).
$$
Since~$P$ satisfies the Ext condition, the left hand side is trivial and therefore so is the right hand side.
This shows that~$f_*^{-1}(P)$ satisfies the Ext condition and thus concludes the proof of the proposition.
\end{proof}


We are now ready to prove Theorem~\ref{thm:NewDoublySliceIntro} from the introduction.
We recall the statement for the reader's convenience.


\begin{theorem}
\label{thm:ApplicationMain}
Let~$R$ be a pattern and~$\eta \subset S^3 \setminus R$ be an infection curve that lies in~$\pi_1(S^3 \setminus R)^{(2)}$.
If both summands of~$\mathcal{A}_{R}=\Lambda/(t\unaryminus 2) \oplus \Lambda/(t^{-1}\unaryminus 2)$ satisfy the Ext condition,  then~$R_\eta(K)$ is doubly slice for any~$K$.
\end{theorem}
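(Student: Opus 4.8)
The plan is to reduce the double sliceness of $R_\eta(K)$ to Theorem~\ref{thm:MainIntro} by verifying its two hypotheses for $R_\eta(K)$: that $\mathcal{A}_{R_\eta(K)} \cong \Lambda/(t\unaryminus 2) \oplus \Lambda/(t^{-1}\unaryminus 2)$, and that both of its summands satisfy the Ext condition. The whole argument transports the Ext condition from $R$ to $R_\eta(K)$ along the degree one map $f \colon M_{R_\eta(K)} \to M_R$ of Construction~\ref{cons:MapFromSatellite}, using the machinery already set up in Lemmas~\ref{lem:IsoHomology} and~\ref{lem:GroupAutomorphism} and in Proposition~\ref{prop:ExtConditionImage}.

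First I would observe that $\eta$ has winding number zero. The winding number $n=\ell k(R,\eta)$ equals the image of $[\eta]$ under the abelianisation $\pi_1(S^3 \setminus R) \twoheadrightarrow \Z$, and since $\pi_1(S^3 \setminus R)^{(2)} \subseteq \pi_1(S^3 \setminus R)^{(1)} = \ker(\operatorname{ab})$, the hypothesis $\eta \in \pi_1(S^3 \setminus R)^{(2)}$ forces $n=0$. This is precisely the hypothesis under which the preceding results of this section apply. Applying Lemma~\ref{lem:IsoHomology} with $\Gamma = \Z$ and $\psi = \operatorname{ab}$ (so that $\psi(\eta)=1$) yields a $\Lambda$-isomorphism $f_* \colon \mathcal{A}_{R_\eta(K)} \xrightarrow{\cong} \mathcal{A}_R \cong \Lambda/(t\unaryminus 2) \oplus \Lambda/(t^{-1}\unaryminus 2)$. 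This settles the first hypothesis, and since $f_*$ is a $\Lambda$-isomorphism it carries the $(t\unaryminus 2)$- and $(t^{-1}\unaryminus 2)$-primary parts of $\mathcal{A}_{R_\eta(K)}$ onto the summands $P_1,P_2$ of $\mathcal{A}_R$; equivalently, $f_*^{-1}(P_1)$ and $f_*^{-1}(P_2)$ are exactly the two summands of $\mathcal{A}_{R_\eta(K)}$.

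The key step is to convert the hypothesis $\eta \in \pi_1(S^3 \setminus R)^{(2)}$ into the condition $\phi_{P_i}(\eta)=1$ demanded by Proposition~\ref{prop:ExtConditionImage}. The inclusion $S^3 \setminus \nu(R) \hookrightarrow M_R$ induces a surjection $\pi_1(S^3 \setminus R) \twoheadrightarrow \pi_R$, and functoriality of the derived series sends $\pi_1(S^3 \setminus R)^{(2)}$ into $\pi_R^{(2)}$. Hence the image of $\eta$ in $\pi_R$ lies in $\pi_R^{(2)}$, so $\eta$ becomes trivial in $\pi_R/\pi_R^{(2)}$; as each $\phi_{P_i}$ factors through $\pi_R/\pi_R^{(2)}$, we conclude $\phi_{P_i}(\eta)=1$ for $i=1,2$. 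With $\eta$ of winding number zero and $\phi_{P_i}(\eta)=1$, Proposition~\ref{prop:ExtConditionImage} applies to each summand $P_i$ \emph{with no assumption on $K$}, giving that $f_*^{-1}(P_i) \leq \mathcal{A}_{R_\eta(K)}$ satisfies the Ext condition for $i=1,2$.

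Since $f_*^{-1}(P_1)$ and $f_*^{-1}(P_2)$ are the two summands of $\mathcal{A}_{R_\eta(K)} \cong \Lambda/(t\unaryminus 2) \oplus \Lambda/(t^{-1}\unaryminus 2)$, both summands satisfy the Ext condition, and Theorem~\ref{thm:MainIntro} then shows that $R_\eta(K)$ is doubly slice for every knot~$K$. I expect the main obstacle to be the bookkeeping of the previous paragraph: making sure the condition imposed on $\eta$ inside $\pi_1(S^3 \setminus R)$ genuinely passes to $\pi_R$ and kills $\eta$ in the relevant metabelian quotient, and checking that $f_*^{-1}$ identifies the summands of $\mathcal{A}_{R_\eta(K)}$ with $P_1,P_2$ so that Proposition~\ref{prop:ExtConditionImage} controls precisely the two summands that Theorem~\ref{thm:MainIntro} requires. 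The remaining steps are a direct assembly of results proved earlier in the paper.
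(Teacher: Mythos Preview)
Your proposal is correct and follows essentially the same route as the paper: use $\eta\in\pi_1(S^3\setminus R)^{(2)}$ to get winding number zero and apply Lemma~\ref{lem:IsoHomology} with $\psi=\operatorname{ab}$ to identify the Alexander modules, observe that~$\phi_{P_i}$ factors through $\pi_R/\pi_R^{(2)}$ so $\phi_{P_i}(\eta)=1$, invoke Proposition~\ref{prop:ExtConditionImage} for each summand, and conclude via Theorem~\ref{thm:MainIntro}. Your bookkeeping about passing from $\pi_1(S^3\setminus R)^{(2)}$ to $\pi_R^{(2)}$ and identifying $f_*^{-1}(P_i)$ with the summands of $\mathcal{A}_{R_\eta(K)}$ is exactly what the paper does, just spelled out in slightly more detail.
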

\begin{proof}
We verify that~$R_\eta(K)$ has Alexander module~$\Lambda/(t\unaryminus 2) \oplus \Lambda/(t^{-1}\unaryminus 2)$ and that its summands satisfy the Ext condition.
Since~$\eta \in \pi_1(E_R)^{(2)}$,  it lies in~$\pi_1(E_R)^{(1)}$ i.e.  $\eta$ belongs to the kernel of the abelianisation map $\operatorname{ab} \colon \pi_1(E_R) \twoheadrightarrow \Z$.
It follows that $\eta \in \ker(\operatorname{ab} \colon \pi_1(M_R) \twoheadrightarrow \Z)$ and we can therefore apply Lemma~\ref{lem:IsoHomology} with $\psi=\operatorname{ab}$  to deduce that~$f \colon M_{R_\eta(K)} \to M_R$ induces a~$\Lambda$-isomorphism on the Alexander modules:
$$f_* \colon  \mathcal{A}_{R_\eta(K)} \xrightarrow{\cong} \mathcal{A}_R \cong \Lambda/(t\unaryminus 2) \oplus \Lambda/(t^{-1}\unaryminus 2).$$
Use $P_1$ and $P_2 $ to denote the summands of $\mathcal{A}_R$ so that~$f_*^{-1}(P_1)$ and~$f_*^{-1}(P_2)$ are the summands of~$\mathcal{A}_{R_\eta(K)}.$
By definition,  $\phi_{P_i}$ factors through $\pi_1(M_R)/\pi_1(M_R)^{(2)}$ and thus vanishes on the second derived subgroup of $\pi_1(M_R)$. 
In particular we have $\phi_{P_i}(\eta)=~1$.
Since both summands~$P_1,P_2$ of~$\mathcal{A}_R$ satisfy the Ext condition and $\phi_{P_1}(\eta)=1=\phi_{P_2}(\eta)$, Proposition~\ref{prop:ExtConditionImage} ensures that~$f_*^{-1}(P_1)$ and~$f_*^{-1}(P_2)$ also satisfy the Ext condition.
Theorem~\ref{thm:MainIntro} implies that~$R_\eta(K)$ is doubly slice.
\end{proof}


We apply this theorem to a concrete example.

\begin{figure}[!htbp]
\centering
\captionsetup[subfigure]{}
\begin{subfigure}[t]{0.3\textwidth}
\includegraphics[scale=0.3]{946Satellite1Hand}
\end{subfigure}
 \hspace{2.5cm}
\begin{subfigure}[t]{.30\textwidth}
\includegraphics[scale=0.3]{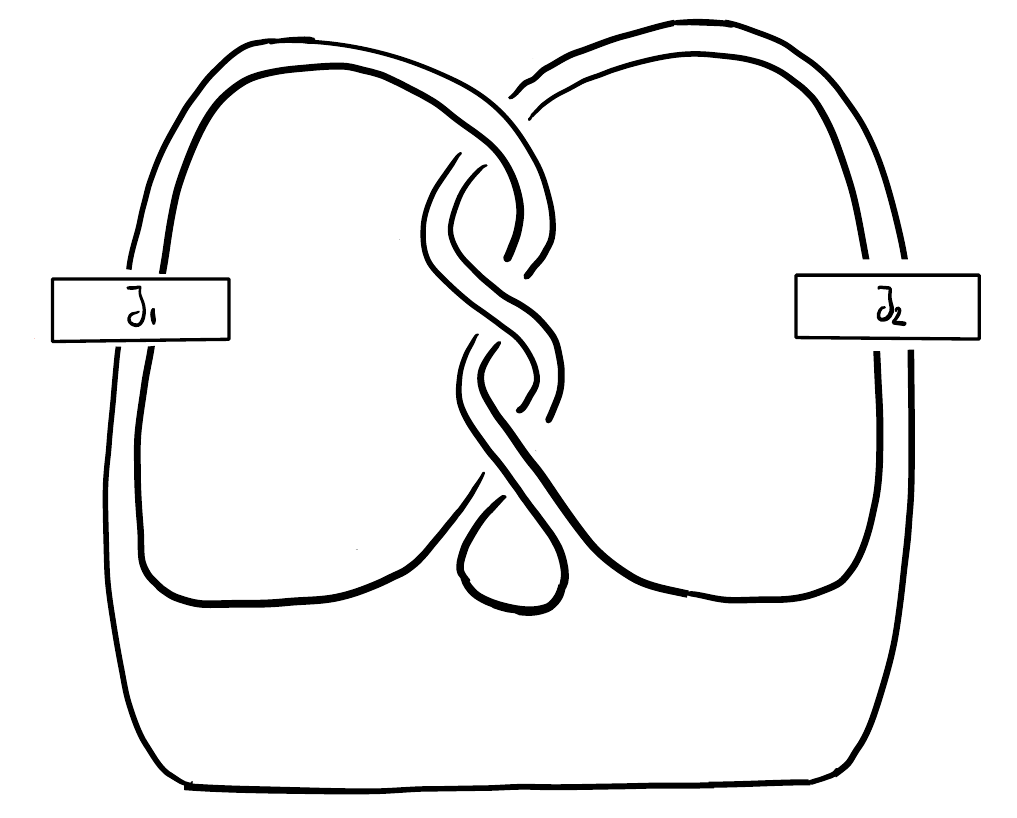}
\end{subfigure}
\caption{On the left: the knot $\mathcal{R}:=9_{46}$ with the infections curves $\gamma_1,\gamma_2$; on the right: the satellite knot $\mathcal{R}(J_1,J_2)$ obtained by infecting $\mathcal{R}$ along the curves~$\gamma_1,\gamma_2$ with respective companions $J_1,J_2$.}
\label{fig:946Satellite}
\end{figure}

\begin{construction}
\label{cons:NewExample}
Consider the knot~$\mathcal{R}=9_{46}$ together with the infection curves~$\gamma_1$ and $\gamma_2$ illustrated on the left of Figure~\ref{fig:946Satellite} and write $\mathcal{R}(J_1,J_2)$ for the satellite knot obtained by infecting~$\mathcal{R}$ along~$\gamma_1,\gamma_2$ with respective companions $J_1$ and $J_2$.
The knot $\mathcal{R}(J_1,J_2)$ is illustrated on the right hand side  of Figure~\ref{fig:946Satellite}.
For $i=1,2$ write~$W_i:=\operatorname{Wh}(T_i)$ for an untwisted Whitehead double of a knot~$T_i$ (with either choice of clasp)
and set~$\mathcal{R}^{T_1,T_2}:=\mathcal{R}(W_1,W_2)$.
Finally consider the curves $\eta_1,\eta_2 \subset S^3 \setminus \mathcal{R}^{T_1,T_2}$ depicted in Figure~\ref{fig:Repeat} and write $ \mathcal{R}^{T_1,T_2}(K_1,K_2)$ for the knot obtained by infecting~$\mathcal{R}^{T_1,T_2}$ along~$\eta_1,\eta_2$ with respective companions $K_1$ and $K_2$.
\end{construction}

\begin{figure}[!htbp]
\centering
\captionsetup[subfigure]{}
\begin{subfigure}[t]{0.3\textwidth}
\includegraphics[scale=0.3]{ExampleDoublySlice}
\end{subfigure}
 \hspace{2.5cm}
\begin{subfigure}[t]{.30\textwidth}
\includegraphics[scale=0.3]{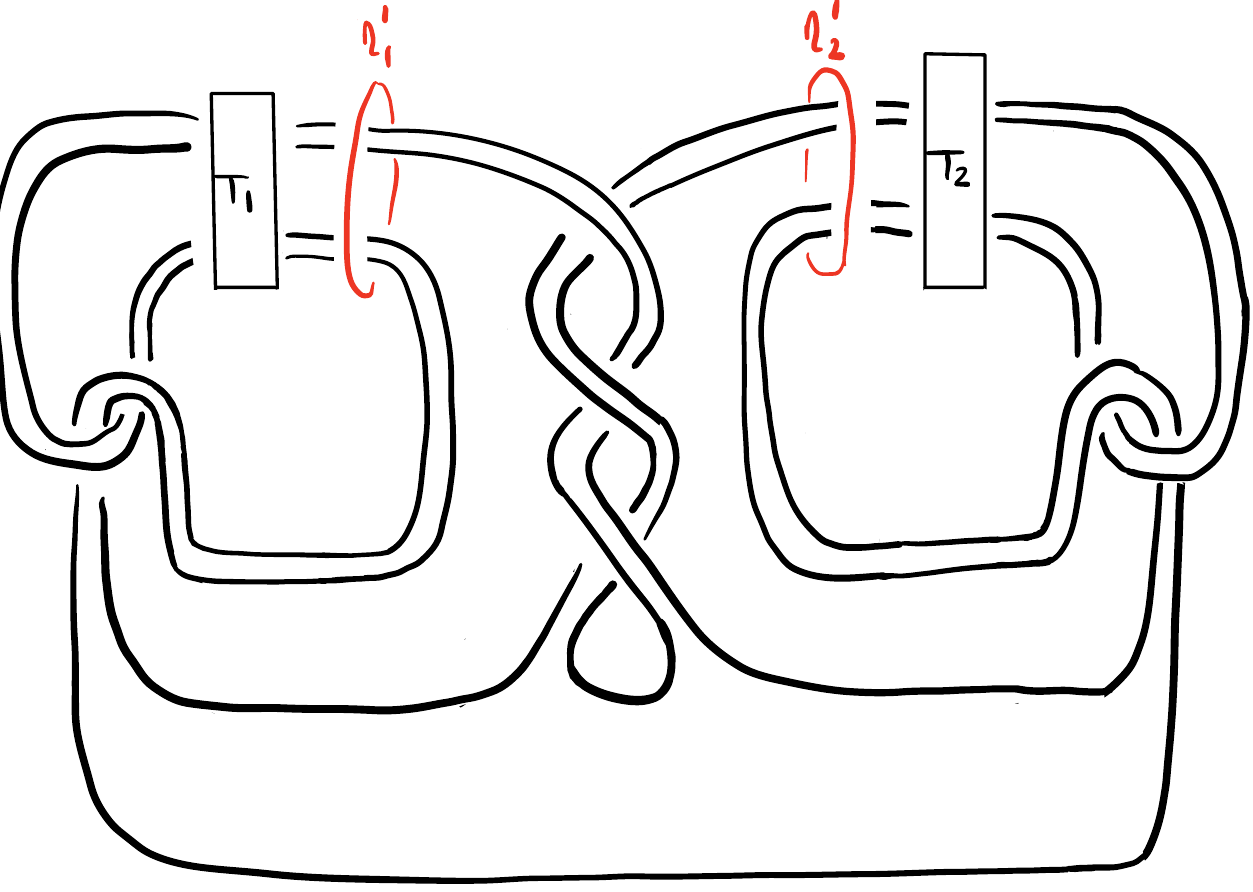}
\end{subfigure}
\caption{On the left: the knot $\mathcal{R}^{T_1,T_2}$ together with the infection curves $\eta_1,\eta_2$; on the right: the same knot but with the infection curves $\eta_1',\eta_2'$.}
\label{fig:Repeat}
\end{figure}


\begin{proposition}
\label{prop:Application}
The knot $\mathcal{R}^{T_1,T_2}(K_1,K_2)$ is doubly slice for any knots $T_1,T_2,K_1,K_2$.
\end{proposition}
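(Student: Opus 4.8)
The plan is to realise $\mathcal{R}^{T_1,T_2}(K_1,K_2)$ as a four-fold iterated satellite of $\mathcal{R}=9_{46}$, to propagate the hypotheses of Theorem~\ref{thm:ApplicationMain} through each infection using Lemma~\ref{lem:IsoHomology} and Proposition~\ref{prop:ExtConditionImage}, and then to conclude with a single application of Theorem~\ref{thm:MainIntro}. First I would record the input data for the companion-free pattern: $\mathcal{R}=9_{46}$ has Alexander module $\mathcal{A}_{\mathcal{R}}\cong\Lambda/(t\unaryminus 2)\oplus\Lambda/(t^{-1}\unaryminus 2)$ and both of its summands satisfy the Ext condition. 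I would also read off from Figure~\ref{fig:946Satellite} that $\gamma_1,\gamma_2$ have winding number zero, and from Figure~\ref{fig:Repeat} that $\eta_1,\eta_2$ lie in $\pi_1(S^3\setminus\mathcal{R}^{T_1,T_2})^{(2)}$.

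I would then treat the two layers of infection separately. For the first layer $\mathcal{R}\rightsquigarrow\mathcal{R}^{T_1,T_2}=\mathcal{R}(W_1,W_2)$, the essential feature is that the untwisted Whitehead doubles satisfy $\Delta_{W_i}\doteq 1$. Infecting along $\gamma_1$ with $W_1$, Lemma~\ref{lem:IsoHomology} (with $\psi=\operatorname{ab}$, whose hypothesis holds because $\operatorname{ab}(\gamma_1)=1$) gives a $\Lambda$-isomorphism $f_*\colon\mathcal{A}_{\mathcal{R}_{\gamma_1}(W_1)}\xrightarrow{\cong}\mathcal{A}_{\mathcal{R}}$, and since $\gamma_1$ has winding number zero and $\Delta_{W_1}\doteq 1$, Proposition~\ref{prop:ExtConditionImage} shows that the two summands $f_*^{-1}(P_i)$ of the new Alexander module again satisfy the Ext condition. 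Because $f$ is a degree one map inducing an isomorphism on second derived quotients (the square~\eqref{eq:SecondDerivedHomomf}), the curve $\gamma_2$ still has winding number zero in the new complement; repeating the argument for $\gamma_2$ with $W_2$ then shows that $\mathcal{R}^{T_1,T_2}$ has Alexander module $\Lambda/(t\unaryminus 2)\oplus\Lambda/(t^{-1}\unaryminus 2)$ with both summands satisfying the Ext condition.

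For the second layer I would take $R:=\mathcal{R}^{T_1,T_2}$ as the pattern. Here I exploit that $\eta_1\in\pi_1(S^3\setminus R)^{(2)}$ forces both $\operatorname{ab}(\eta_1)=1$ and $\phi_{P_i}(\eta_1)=1$ (since $\phi_{P_i}$ factors through $\pi/\pi^{(2)}$), so that Lemma~\ref{lem:IsoHomology} and Proposition~\ref{prop:ExtConditionImage} apply \emph{for an arbitrary companion} $K_1$: the knot $R_{\eta_1}(K_1)$ again has the prescribed Alexander module and both summands satisfy the Ext condition, while the isomorphism on second derived quotients keeps $\eta_2$ inside the second derived subgroup of the new complement. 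Infecting once more along $\eta_2$ with $K_2$ and repeating, I conclude that $\mathcal{R}^{T_1,T_2}(K_1,K_2)$ has Alexander module $\Lambda/(t\unaryminus 2)\oplus\Lambda/(t^{-1}\unaryminus 2)$ with both summands satisfying the Ext condition, and Theorem~\ref{thm:MainIntro} then shows that it is doubly slice.

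The homological propagation above is routine once the preservation results of this section are in hand, so I expect the genuine obstacle to lie entirely in the geometric input: verifying the double Ext condition for $\mathcal{R}=9_{46}$, and checking from the diagrams that $\gamma_1,\gamma_2$ have winding number zero while $\eta_1,\eta_2$ actually lie in $\pi_1(S^3\setminus\mathcal{R}^{T_1,T_2})^{(2)}$ rather than merely in the first derived subgroup. This distinction matters because only the deeper membership licenses the use of arbitrary companions $K_1,K_2$ in the final layer, and confirming it requires controlling the derived-series depth of $\eta_1,\eta_2$ under the intermediate Whitehead-double infections by means of the degree one maps $f$.
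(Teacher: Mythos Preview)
Your strategy is exactly the paper's: propagate the Alexander module and the Ext condition first through the two Whitehead-double infections via the $\Delta_{W_i}\doteq 1$ clause of Proposition~\ref{prop:ExtConditionImage}, then through the $\eta_i$-infections via the $\phi_P(\eta_i)=1$ clause, and finish with Theorem~\ref{thm:MainIntro}.  The one point you leave as an ``obstacle'' is the verification that $\eta_i\in\pi_1(E_R)^{(2)}$ for $R=\mathcal R^{T_1,T_2}$, and here the paper does not use the degree one maps $f$ as you suggest but argues directly from the nested satellite structure: one first homotopes $\eta_i$ to a curve $\eta_i'$ lying in the image of the composition $\pi_1(E_{T_i})\to\pi_1(E_{W_i})\to\pi_1(E_R)$, and then observes that each of these inclusion-induced maps lands in the commutator subgroup of its target because the corresponding satellite operator has winding number zero, so the composite lands in $\pi_1(E_R)^{(2)}$.  (Your degree-one-map idea can be made to work too, since $f_*$ induces an isomorphism on $\pi/\pi^{(2)}$ and $f_*(\eta_i')$ is null-homotopic in $M_{\mathcal R}$, but establishing the latter again comes down to the same winding-number-zero observation for $E_{T_i}\subset E_{W_i}$.)
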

\begin{proof}
We abbreviate $\mathcal{R}^{T_1,T_2}$ by $R$.
We claim that the $\eta_i$ lie in the second derived subgroup of~$\pi_1(M_R)$ for $i=1,2.$
It suffices to prove that~$\eta_i \in \pi_1(E_R)^{(2)}$ where~$E_R:=S^3 \setminus \nu(R)$.
Next note that the $\eta_i$ are homotopic in $E_R$ to the curves $\eta_i'$ illustrated on the right of Figure~\ref{fig:Repeat}. 
We will therefore argue that the $\eta_i'$ lie in  $\pi_1(E_R)^{(2)}$. 
Since Whitehead doubling is a winding number zero satellite operator,  the image of any curve $\eta \subset E_{T_i}$ under the inclusion~$E_{T_i} \subset E_{W_i}$ has linking number zero with $W_i$ and thus the image of the map~$\pi_1(E_{T_i}) \to \pi_1(E_{W_i})$ lies in the commutator subgroup of~$\pi_1(E_{W_i})$.
The same argument shows that the image of the inclusion induced map~$\pi_1(E_{W_i}) \to \pi_1(E_R)$
 lies in the commutator subgroup of $\pi_1(E_R)$.
Since $\eta_i'$ belongs to the image of the composition~$\pi_1(E_{T_i}) \to \pi_1(E_{W_i}) \to \pi_1(E_R)$, we deduce that it belongs to~$\pi_1(E_R)^{(2)}$ and thus so does $\eta_i$, 
 proving the claim.

Since $R$ is obtained from $\mathcal{R}=9_{46}$ by winding number zero satellite operations,  Lemma~\ref{lem:IsoHomology} implies that $\mathcal{A}_R \cong \mathcal{A}_{\mathcal{R}} \cong \Lambda/(t\unaryminus 2) \oplus \Lambda/(t^{-1}\unaryminus 2)$.
Since the~$W_i$ have trivial Alexander polynomial,  Lemma~\ref{lem:IsoHomology} implies that $H_1(M_R;\Z[G]) \cong H_1(M_{\mathcal{R}};\Z[G])$.
Since both summands of $\mathcal{A}_{\mathcal{R}}$ satisfy the Ext condition and the $W_i$ have Alexander polynomial one,  Proposition~\ref{prop:ExtConditionImage} shows that both summands of $\mathcal{A}_R$ satisfy the Ext condition.

Since both summands of $\mathcal{A}_R$ satisfy the Ext condition and $\eta_1 \in \pi_1(M_R)^{(2)}$, (the proof of) Theorem~\ref{thm:ApplicationMain} shows that both summands of~$\mathcal{A}_{R_{\eta_1}(K_1)} \cong \Lambda/(t \unaryminus 2) \oplus \Lambda/(t^{-1} \unaryminus 2)$ satisfy the Ext condition.
One can then verify that~$\eta_2 \in \pi_1(M_R)^{(2)}$ remains in the second derived subgroup of~$\pi_1(M_{R_{\eta_1}(K_1)})$: for example because~$R_{\eta_1}(K_1)$ can be thought of as satellite knot with companion~$W_2$,
allowing one to repeat the argument given in the first paragraph of this proof.
Finally since~$\mathcal{A}_{R_{\eta_1}(K_1)} \cong \Lambda/(t \unaryminus 2) \oplus \Lambda/(t^{-1} \unaryminus 2)$ and~$\mathcal{R}^{T_1,T_2}(K_1,K_2)$ is a satellite knot with pattern~$R_{\eta_1}(K_1)$ and infection curve $\eta_2 \in \pi_1(M_{R_{\eta_1}(K_1)})^{(2)}$, a second application of Theorem~\ref{thm:ApplicationMain} shows that~$\mathcal{R}^{T_1,T_2}(K_1,K_2)$ is doubly slice for any knot $K_2$.
\end{proof}

The reason for considering the infection curves $\eta_1,\eta_2$ instead of the curves $\eta_1',\eta_2'$ is that if one used the latter,  then the knot $\mathcal{R}^{T_1,T_2}(K_1,K_2)$ would be isotopic to $\mathcal{R}(\operatorname{Wh}(T_1 \# K_1),\operatorname{Wh}(T_2 \# K_2))$ which is already known to be doubly slice since both $\mathcal{R}$ and Whitehead doubles are doubly slice.


\subsection{The double Ext condition is not necessary for double sliceness}
\label{sub:NonNecessity}

We continue with the notation from the previous section: $\mathcal{R}$ denotes $9_{46}$ and $\mathcal{R}(J_1,J_2)$ denotes the knot illustrated on the right of Figure~\ref{fig:946Satellite}.

\begin{proposition}
\label{prop:NotNecessary}
The knot $K:=\mathcal{R}(\mathcal{R},\mathcal{R})$ has Alexander module $\mathcal{A}_K \cong \Lambda/(t\unaryminus 2)  \oplus \Lambda/(t^{-1}\unaryminus 2)$ and is doubly slice, but its summands do not satisfy the Ext condition.
\end{proposition}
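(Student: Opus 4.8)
The plan is to verify the three assertions in turn, the last being the substantial one. Throughout write $\mathcal{A}_\mathcal{R} \cong \Lambda/(t\unaryminus 2) \oplus \Lambda/(t^{-1}\unaryminus 2) = P_1 \oplus P_2$, and recall that both summands $P_1,P_2$ satisfy the Ext condition, so that $\mathcal{R}=9_{46}$ is itself doubly slice by Theorem~\ref{thm:MainIntro}. Since $\gamma_1$ and $\gamma_2$ have winding number zero, Lemma~\ref{lem:IsoHomology} applied with $\psi=\operatorname{ab}$ (so that $\Z[\Gamma]=\Lambda$) to each infection shows that the collapsing map $f\colon M_K \to M_\mathcal{R}$ induces a $\Lambda$-isomorphism $\mathcal{A}_K \xrightarrow{\cong} \mathcal{A}_\mathcal{R}$, giving the first assertion. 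For double sliceness, the pattern $\mathcal{R}$ and both companions (again $\mathcal{R}$) are doubly slice, so $K=\mathcal{R}(\mathcal{R},\mathcal{R})$ is doubly slice by the standard satellite construction~\cite[Proposition~3.4]{Meier}, applied once for each infection curve.

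It remains to show that the summands $f_*^{-1}(P_1),f_*^{-1}(P_2)$ of $\mathcal{A}_K$ fail the Ext condition; by symmetry I treat $f_*^{-1}(P_1)$, and the identical argument with $\gamma_1,\gamma_2$ interchanged handles $f_*^{-1}(P_2)$. The point is that Proposition~\ref{prop:ExtConditionImage} does \emph{not} apply here: the companions $\mathcal{R}$ have $\Delta_\mathcal{R}\doteq(t\unaryminus 2)(t^{-1}\unaryminus 2)\not\doteq 1$, and, since $[\gamma_1]$ and $[\gamma_2]$ generate the complementary summands $P_1$ and $P_2$, we have $\phi_{P_1}(\gamma_1)=1$ but $g:=\phi_{P_1}(\gamma_2)\neq 1$, an infinite-order element of the abelian normal subgroup $\Z[\tmfrac{1}{2}]\leq G$. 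Thus the infection along $\gamma_1$ is ``inactive'' and, by Lemma~\ref{lem:IsoHomology} (applied with $\psi=\phi_{P_1}$ and $\psi(\gamma_1)=1$), may be collapsed without changing $H_1(-;\Z[G]_{\phi_{P_1}})$; this reduces us to the single ``active'' infection $K'=\mathcal{R}_{\gamma_2}(\mathcal{R})$.

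First I would compute $H_1(M_{K'};\Z[G]_{\phi_{P_1}})$ by comparing the Mayer--Vietoris sequences of $M_{K'}=(M_\mathcal{R}\setminus\nu(\gamma_2))\cup E_\mathcal{R}$ and $M_\mathcal{R}=(M_\mathcal{R}\setminus\nu(\gamma_2))\cup\overline{\nu}(\gamma_2)$, exactly as in the proof of Lemma~\ref{lem:IsoHomology}. The coefficient system on $E_\mathcal{R}$ factors through the meridian $\mu\mapsto g$, and since $g$ has infinite order the ring $\Z[G]$ is free as a left and as a right module over $\Z[\langle g\rangle]\cong\Lambda$ (on coset bases); hence $H_1(E_\mathcal{R};\Z[G])\cong\mathcal{A}_\mathcal{R}\otimes_\Lambda\Z[G]\cong \Z[G]/\Z[G](g\unaryminus 2)\oplus\Z[G]/\Z[G](g^{-1}\unaryminus 2)$, whereas $H_1(\overline{\nu}(\gamma_2);\Z[G])=0$. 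Crucially, the companion longitude lies in $\pi_1(E_\mathcal{R})^{(2)}$, so the inclusion $H_1(\partial E_\mathcal{R};\Z[G])\to H_1(E_\mathcal{R};\Z[G])$ vanishes, while all remaining terms of the two sequences agree. Tracking this through yields a short exact sequence of $\Z[G]$-modules
\begin{equation*}
0 \to N \to H_1(M_{K'};\Z[G]_{\phi_{P_1}}) \xrightarrow{\;f_*\;} H_1(M_\mathcal{R};\Z[G]_{\phi_{P_1}}) \to 0, \qquad N:=\Z[G]/\Z[G](g\unaryminus 2)\oplus\Z[G]/\Z[G](g^{-1}\unaryminus 2).
\end{equation*}
For each summand of $N$ the tautological sequence $0\to\Z[G]\xrightarrow{\cdot(g\unaryminus 2)}\Z[G]\to\Z[G]/\Z[G](g\unaryminus 2)\to 0$ is a length-one free resolution (right multiplication by $g\unaryminus 2$ is injective, being diagonal multiplication by $t\unaryminus 2$ on the coset basis), and applying $\operatorname{Hom}_{\Z[G]}(-,\Z[G])$ gives $\operatorname{Ext}^1_{\Z[G]}(\Z[G]/\Z[G](g\unaryminus 2),\Z[G])\cong\Z[G]/(g\unaryminus 2)\Z[G]$, which is nonzero because $g\unaryminus 2$ is not left invertible (again by the coset-basis decomposition, the cokernel is $\bigoplus\Lambda/(t\unaryminus 2)\neq 0$). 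Hence $\operatorname{Ext}^1_{\Z[G]}(N,\Z[G])\neq 0$.

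Finally, since $P_1$ satisfies the Ext condition for $\mathcal{R}$, we have $\operatorname{Ext}^1_{\Z[G]}(H_1(M_\mathcal{R};\Z[G]_{\phi_{P_1}}),\Z[G])=0$, and the long exact sequence of $\operatorname{Ext}^*(-,\Z[G])$ applied to the displayed sequence identifies $\operatorname{Ext}^1_{\Z[G]}(H_1(M_{K'};\Z[G]_{\phi_{P_1}}),\Z[G])$ with the kernel of a connecting map $\operatorname{Ext}^1(N,\Z[G])\to\operatorname{Ext}^2(H_1(M_\mathcal{R};\Z[G]_{\phi_{P_1}}),\Z[G])$. \emph{This is the main obstacle}: the $\operatorname{Ext}^2$ term is in general nonzero (the twisted Alexander module of $\mathcal{R}$ has projective dimension two over $\Z[G]$), so I cannot simply conclude. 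The cleanest resolution is to upgrade the short exact sequence to a split one, i.e.\ to show that $N=H_1(E_\mathcal{R};\Z[G])$ is a \emph{direct summand} of $H_1(M_{K'};\Z[G]_{\phi_{P_1}})$; here one uses the vanishing of $H_1(\partial E_\mathcal{R};\Z[G])\to H_1(E_\mathcal{R};\Z[G])$ a second time, to see that the companion classes are unlinked from the boundary classes and that the Mayer--Vietoris extension is carried entirely by the pattern side. Carrying this out carefully---an analogue of~\cite[Proposition~7.4]{FriedlTeichner}---gives $\operatorname{Ext}^1_{\Z[G]}(H_1(M_{K'};\Z[G]_{\phi_{P_1}}),\Z[G])\supseteq\operatorname{Ext}^1_{\Z[G]}(N,\Z[G])\neq 0$, so that $f_*^{-1}(P_1)$, and symmetrically $f_*^{-1}(P_2)$, fails the Ext condition.
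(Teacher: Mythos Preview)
Your first two paragraphs match the paper. For the third assertion the paper takes a far shorter route: if either summand of $\mathcal{A}_K$ satisfied the Ext condition then $K$ would be $G$-homotopy ribbon by \cite{FriedlTeichner,ConwayPowellDiscs}, contradicting \cite[Proposition~7.7]{FriedlTeichner} (together with the erratum \cite{FriedlTeichnerErratum}), which shows that $\mathcal{R}(J_1,J_2)$ is not $G$-homotopy ribbon whenever $\Delta_{J_1}\not\doteq 1$ and $\Delta_{J_2}\not\doteq 1$. Your direct Mayer--Vietoris computation is in effect an attempt to reprove that black-boxed obstruction at the level of $\operatorname{Ext}$.

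The gap is exactly where you flag it, and your proposed fix does not close it. From the long exact sequence you need the connecting map $\operatorname{Ext}^1_{\Z[G]}(N,\Z[G])\to\operatorname{Ext}^2_{\Z[G]}(H_1(M_\mathcal{R};\Z[G]_{\phi_{P_1}}),\Z[G])$ to fail to be injective, and you propose to force this by splitting $0\to N\to H_1(M_{K'};\Z[G])\to H_1(M_\mathcal{R};\Z[G])\to 0$. But the vanishing of $H_1(\partial E_\mathcal{R};\Z[G])\to H_1(E_\mathcal{R};\Z[G])$ is precisely what manufactures the short exact sequence in the first place: it tells you $N$ \emph{injects} into $H_1(M_{K'};\Z[G])$, not that it splits off. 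Producing a retraction onto $N$, or a section of $f_*$, requires new input --- there is no map $M_\mathcal{R}\to M_{K'}$ to induce one, and ``the Mayer--Vietoris extension is carried entirely by the pattern side'' is a restatement of the problem rather than a solution. To make your approach rigorous you would have to compute the connecting map (equivalently, analyse the extension class) explicitly, which is essentially the work behind \cite[Section~7]{FriedlTeichner}; the paper simply cites that result and the equivalence between the Ext condition and $G$-homotopy ribbonness.
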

\begin{proof}
We recalled that a satellite knot is doubly slice if both the pattern and the companion are doubly slice; see e.g.~\cite[Proposition 3.4]{Meier} for a proof.
Since $\mathcal{R}=9_{46}$ is doubly slice,  we deduce that the knot $K=\mathcal{R}(\mathcal{R},\mathcal{R})$ is doubly slice.
Since $K$ is obtained by a winding number zero satellite operation on a knot with Alexander module~$\Lambda/(t\unaryminus 2)  \oplus \Lambda/(t^{-1}\unaryminus 2)$, we deduce from Lemma~\ref{lem:IsoHomology} that its Alexander module is also~$\Lambda/(t\unaryminus 2)  \oplus \Lambda/(t^{-1}\unaryminus 2).$

If either of the summands of $\mathcal{A}_K$ satisfied the Ext condition, then by~\cite{ConwayPowellDiscs,FriedlTeichner} $K$ would be~$G$-homotopy ribbon.
This would contradict~\cite[Proposition 7.7]{FriedlTeichner} according to which $\mathcal{R}(J_1,J_2)$ is not~$G$-homotopy ribbon if $\Delta_{J_1} \neq 1$ and $\Delta_{J_2} \neq 1$.
Note that~\cite[Proposition 7.7]{FriedlTeichner} refers to $\mathcal{R}$ as being $6_1$ but as explained in the erratum~\cite{FriedlTeichnerErratum}, it is actually $9_{46}.$
\end{proof}

\def\MR#1{}
\bibliography{BiblioDoubly}
\end{document}